\newtheorem{theorem}{Theorem}
\newtheorem{lemma}[theorem]{Lemma}
\newtheorem{definition}[theorem]{Definition}
\newcommand{\vv}{\boldsymbol v}
\newcommand{\tvv}{\widetilde{\vv}}
\newcommand{\hvv}{\widehat{\vv}}
\newcommand{\ww}{\boldsymbol w}
\newcommand{\rr}{\boldsymbol r}
\newcommand{\trr}{\widetilde \rr}
\newcommand{\hrr}{\widehat   \rr}
\newcommand{\xxi}{\boldsymbol \xi}
\newcommand{\xx}{\boldsymbol x}
\newcommand{\yy}{\boldsymbol y}
\newcommand{\XX}{\boldsymbol X}
\newcommand{\RRR}{\boldsymbol R}
\newcommand{\nn}{\boldsymbol n}
\newcommand{\ttau}{{\boldsymbol \tau}}
\newcommand{\LL}{\boldsymbol L}
\newcommand{\HH}{\boldsymbol H}
\newcommand{\CC}{\pmb {\mathcal C}}
\newcommand{\ccurl}{\boldsymbol{\operatorname{curl}}}
\newcommand{\scurl}{\operatorname{curl}}
\newcommand{\ddiv}{\operatorname{div}}
\newcommand{\grad}{\boldsymbol \nabla}
\renewcommand{\div}{\grad \cdot}
\newcommand{\curl}{\grad \times}
\newcommand{\trc}{\operatorname{trc}}
\newcommand{\VV}{\boldsymbol V}
\newcommand{\FF}{\mathcal F}
\newcommand{\PP}{\pmb{\mathcal P}}
\newcommand{\NN}{\pmb{\mathcal N}}
\newcommand{\RT}{\pmb{\mathcal{RT}}}
\newcommand{\ppsi}{\boldsymbol \psi}
\newcommand{\pphi}{\boldsymbol \phi}
\newcommand{\ttheta}{\boldsymbol \theta}
\newcommand{\edge}{e}
\newcommand{\ppi}{\boldsymbol \pi}
\newcommand{\hK}{{\widehat K}}
\newcommand{\hF}{{\widehat F}}
\newcommand{\hFF}{{\widehat{\FF}}}
\newcommand{\TTT}{\boldsymbol T}
\newcommand{\JJJ}{\mathbb J}
\newcommand{\Kin} {\hK}
\newcommand{\Kout}{K}
\newcommand{\eq}{:=}
\newcommand\eg{e.g.}
\newcommand{\bse}{\begin{subequations}}
\newcommand{\ese}{\end{subequations}}
\title[p-robust H(curl)-stability of discrete minimization]{Polynomial-degree-robust $\HH(\ccurl)$-stability of discrete minimization in a tetrahedron$^\star$}
\author{T. Chaumont-Frelet$^{1,2}$}
\author{A. Ern$^{3,4}$}
\author{M. Vohral\'ik$^{4,3}$}
\address{\vspace{-.5cm}}
\address{\noindent \tiny \textup{$^\star$This project has received funding from the European Research Council (ERC) under the European Union’s Horizon 2020}}
\address{\noindent \tiny \textup{\hspace{.2cm}research and innovation program (grant agreement No 647134 GATIPOR).}}
\address{\noindent \tiny \textup{$^1$Inria, 2004 Route des Lucioles, 06902 Valbonne, France}}
\address{\noindent \tiny \textup{$^2$Laboratoire J.A. Dieudonn\'e, Parc Valrose, 28 Avenue Valrose, 06108 Nice Cedex 02, 06000 Nice, France}}
\address{\noindent \tiny \textup{$^3$Universit\'e Paris-Est, CERMICS (ENPC), 6 et 8 av. Blaise Pascal 77455 Marne la Vall\'ee cedex 2, France}}
\address{\noindent \tiny \textup{$^4$Inria, 2 rue Simone Iff, 75589 Paris, France}}
\begin{document}

\maketitle

\begin{abstract}
We prove that the minimizer in the N\'ed\'elec polynomial space of some degree $p\ge0$
of a discrete minimization problem performs as well as the continuous minimizer
in $\HH(\ccurl)$, up to a constant that is independent of the polynomial degree $p$.
The minimization problems are posed for fields defined on a single non-degenerate tetrahedron
in $\mathbb R^3$ with polynomial constraints enforced on the curl of the field and its tangential
trace on some faces of the tetrahedron. This result builds upon [L.~Demkowicz, J.~Gopalakrishnan, J.~Sch{\"o}berl, \emph{SIAM J. Numer. Anal.} \textbf{47} (2009), 3293--3324] and [M.~Costabel, A.~McIntosh, \emph{Math. Z.} \textbf{265} (2010), 297--320] and is a fundamental ingredient to build polynomial-degree-robust a posteriori error estimators when approximating the Maxwell equations in several regimes leading to a curl-curl problem.

\noindent
{\sc Key words.} polynomial extension operator; robustness; polynomial degree;
flux reconstruction; a posteriori error estimate; best approximation; finite element method.

\noindent
{\sc AMS subject classification.} 65N15; 65N30; 76M10.
\end{abstract}

\section{Introduction}

When discretizing the Poisson equation with Lagrange finite elements,
flux equilibrated error estimators can be employed to build polynomial-degree-robust
(or $p$-robust for short) a posteriori error estimators
\cite{Brae_Pill_Sch_p_rob_09,Ern_Voh_p_rob_15}. This property, which is particularly important
for $hp$-adaptivity (see for instance~\cite{Dan_Ern_Sme_Voh_guar_red_18} and the references therein), means that
the local a posteriori error estimator is, up to data oscillation, a lower bound
of the local approximation error, up to a constant that is independent of the
polynomial degree (the constant can depend on the shape-regularity of the mesh).
It turns out that one of the cornerstones of $p$-robust local efficiency
is a $p$-robust $\HH(\ddiv)$-stability result
of a discrete minimization problem posed in a single mesh tetrahedron.
More precisely, let $K \subset \mathbb R^3$ be a non-degenerate tetrahedron and let
$\emptyset\subseteq \FF \subseteq \FF_K$ be a (sub)set of its faces.
Then there is a constant $C$ such that for every polynomial degree $p\ge0$ and all polynomial data
$r_K \in \mathcal P_p(K)$ and $r_F \in \mathcal P_p(F)$ for all
$F\in \FF$, such that $(r_K,1)_K=\sum_{F\in\FF} (r_F,1)_F$ if $\FF=\FF_K$
(detailed notation is explained below), one has
\begin{equation}
\label{stable_minimization_Hdiv}
\min_{\substack{
\vv_p \in \RT_p(K)
\\
\div \vv_p = r_K
\\
\vv_p \cdot \nn_K|_F = r_F \; \forall F \in \FF
}}
\|\vv_p\|_{0,K}
\leq
C
\min_{\substack{
\vv \in \HH(\ddiv,K)
\\
\div \vv = r_K
\\
\vv \cdot \nn_K|_F = r_F \; \forall F \in \FF
}}
\|\vv\|_{0,K}.
\end{equation}
This result is shown in~\cite[Lemma~A.3]{Ern_Voh_p_rob_3D_20},
and its proof relies on~\cite[Theorem~7.1]{Demk_Gop_Sch_ext_III_12} and~\cite[Proposition~4.2]{Cost_McInt_Bog_Poinc_10}. Importantly, the constant $C$  in~\eqref{stable_minimization_Hdiv} only depends on the shape-regularity of $K$, that is,
the ratio of its diameter to the diameter of its largest inscribed ball.
Notice that the converse bound of~\eqref{stable_minimization_Hdiv} trivially
holds with constant $1$. The stability result stated in~\eqref{stable_minimization_Hdiv}
is remarkable since it states that the minimizer from the discrete minimization set
performs as well as the minimizer from the continuous minimization set, up to a
$p$-robust constant.

The main contribution of the present work is to establish the counterpart of
\eqref{stable_minimization_Hdiv} for the N\'ed\'elec finite elements of order $p\ge0$ and
the Sobolev space $\HH(\ccurl)$.
As in the $\HH(\ddiv)$ case, our discrete stability result
relies on two key technical tools: a
stable polynomial-preserving lifting of volume data from
\cite[Proposition~4.2]{Cost_McInt_Bog_Poinc_10}, and
stable polynomial-preserving liftings of boundary data from
\cite{Demk_Gop_Sch_ext_I_09,Demk_Gop_Sch_ext_II_09,Demk_Gop_Sch_ext_III_12}.
Our main result, Theorem~\ref{theorem_stability_tetrahedra} below,
may appear as a somewhat expected consequence of these lifting operators, but our
motivation here is to provide all the mathematical
details of the proofs, which turn out to be nontrivial and in particular
more complex than in~\cite[Lemma~A.3]{Ern_Voh_p_rob_3D_20}. In particular 
the notion of tangential traces in $\HH(\ccurl)$ is somewhat delicate, and 
we employ a slightly different definition compared to
\cite{Demk_Gop_Sch_ext_I_09,Demk_Gop_Sch_ext_II_09,Demk_Gop_Sch_ext_III_12}.
Theorem~\ref{theorem_stability_tetrahedra} is to be used as a building block
in the construction of a $p$-robust a posteriori error estimator for
curl-curl problems. This construction will be analyzed in a forthcoming work.

The remainder of this paper is organized as follows.
We introduce basic notions in Section~\ref{sec:preliminaries_K} so as to state our main result, Theorem~\ref{theorem_stability_tetrahedra}. Then
Section~\ref{sec:proof} presents its proof.

\section{Statement of the main result}
\label{sec:preliminaries_K}

\subsection{Tetrahedron}

Let $K \subset \mathbb R^3$ be an arbitrary tetrahedron. We assume that $K$
is non-degenerate, i.e., the volume of $K$ is positive. We employ the notation
\begin{equation*}
h_K \eq \max_{\xx,\yy \in \overline{K}} |\xx-\yy|,
\qquad
\rho_K
\eq
\max \left \{ d \geq 0
\; \left | \;
\exists \xx \in K; \; B\left (\xx,\frac{d}{2} \right ) \subset \overline{K}
\right . \right \},
\end{equation*}
for the diameter of $K$ and the diameter of the largest closed ball contained
in $\overline{K}$. Then $\kappa_K \eq h_K / \rho_K$ is the so-called
shape-regularity parameter of $K$. Let $\FF_K$ be the set of faces of $K$,
and for every face $F \in \FF_K$, we denote by $\nn_F$ the unit vector normal to $F$ pointing outward $K$.

\subsection{Lebesgue and Sobolev spaces}

The space of square-integrable scalar-valued (resp. vector-valued)
functions on $K$ is denoted by $L^2(K)$ (resp. $\LL^2(K)$),
and we use the notation $(\cdot,\cdot)_K$ and $\|\cdot\|_{0,K}$ for, respectively, the inner product and the associated norm of both $L^2(K)$ and $\LL^2(K)$. $H^1(K)$ is the usual Sobolev space of scalar-valued functions
with weak gradient in $\LL^2(K)$, and $\HH^1(K)$ is the space of vector-valued functions
having all their components in $H^1(K)$.

If $F \in \FF_K$ is a face of $K$, then $\LL^2(F)$ is the set of vector-valued functions
that are square-integrable with respect to the surfacic measure of $F$.
For all $\ww \in \HH^1(K)$, we define the tangential component of $\ww$ on $F$ as
\begin{equation}
\label{eq:def_pi_tau_F}
\ppi^\ttau_F(\ww) \eq \ww|_F - (\ww|_F \cdot \nn_F) \nn_F \in \LL^2(F).
\end{equation}
More generally, if $\FF \subseteq \FF_K$ is a nonempty (sub)set of the faces of $K$,
we employ the notation $\Gamma_\FF \subseteq \partial K$ for the corresponding part of
the boundary of $K$, and $\LL^2(\Gamma_{\FF})$ is the associate Lebesgue space of
square-integrable functions over $\Gamma_{\FF}$.

\subsection{N\'ed\'elec and Raviart--Thomas polynomial spaces}

For any polynomial degree $p \geq 0$, the notation $\PP_p(K)$ stands for the space
of vector-valued polynomials such that all their components belong to $\mathcal P_p(K)$
which is composed of the restriction to $K$ of real-valued polynomials of total
degree at most $p$.
Following~\cite{Ned_mix_R_3_80,Ra_Tho_MFE_77}, we define the
polynomial spaces of N\'ed\'elec and Raviart--Thomas functions as follows:
\begin{equation*}
\NN_p(K) \eq \PP_p(K) + \xx \times \PP_p(K)
\quad \text{ and } \quad
\RT_p(K) \eq \PP_p(K) + \xx \mathcal P_p(K).
\end{equation*}

Let $\FF \subseteq \FF_K$ be a nonempty (sub)set of the faces of $K$.
On $\Gamma_\FF$, we define the (piecewise) polynomial space composed
of the tangential traces of the N\'ed\'elec polynomials
\begin{equation}
\label{eq_tr_K}
\NN_p^\ttau(\Gamma_\FF) \eq
\left \{
\ww_\FF \in \LL^2(\Gamma_\FF) \; | \;
\exists \vv_p \in \NN_p(K);
\ww_F \eq (\ww_\FF)|_F = \ppi^\ttau_F (\vv_p) \quad \forall F \in \FF
\right \}.
\end{equation}
Note that $\ww_\FF \in \NN_p^\ttau(\Gamma_\FF)$ if and only if
$\ww_F\in \NN_p^\ttau(\Gamma_{\{F\}})$ for all $F\in\FF$ and whenever $\FF$ contains two or more faces, $|\FF|\ge2$,
for every pair $(F_-,F_+)$ of distinct faces in $\FF$, the
compatibility condition $(\ww_{F_+})|_\edge \cdot \ttau_\edge =
(\ww_{F_-})|_\edge \cdot \ttau_\edge$ holds true on their common 
edge $\edge \eq F_+\cap F_-$, i.e., the tangential trace is continuous along $\edge$.
For all $\ww_\FF \in \NN_p^\ttau(\Gamma_\FF)$, we define its surface curl as
\begin{equation}
\label{eq_scurl_curl_el}
    \scurl_F (\ww_F) \eq (\curl \vv_p)|_F \cdot \nn_F \qquad \forall F \in \FF,
\end{equation}
where $\vv_p$ is any element of $\NN_p(K)$ such that $\ww_F = \ppi_F^{\ttau}(\vv_p)$
for all $F \in \FF$. This function is well-defined independently of the choice of $\vv_p$.

\subsection{Weak tangential traces for fields in $\HH(\ccurl,K)$ by integration by parts}

Let $\HH(\ccurl,K) \eq \left \{
\vv \in \LL^2(K) \; | \; \curl \vv \in \LL^2(K)
\right \}$ denote the Sobolev space composed of square-integrable vector-valued fields
with square-integrable curl. We equip this space with the norm
$\|\vv\|_{\ccurl,K}^2 \eq \|\vv\|_{0,K}^2 + \ell_K^2\|\curl \vv\|_{0,K}^2$,
where $\ell_K$ is a length scale associated with $K$, e.g., $\ell_K\eq h_K$
(the choice of $\ell_K$ is irrelevant in what follows).

For any field $\vv \in \HH^1(K)$, its tangential trace on a face $F\in\FF_K$
can be defined by using~\eqref{eq:def_pi_tau_F}.
This notion of (tangential) trace is defined (almost everywhere)
on $F$ without invoking test functions. The situation for a field 
in $\HH(\ccurl,K)$ is more delicate.
The tangential trace over the whole boundary of $K$
can be defined by duality, but it is not straightforward to define 
the tangential trace on a part of the boundary of $K$.
While it is possible to use restriction operators
\cite{Demk_Gop_Sch_ext_I_09,Demk_Gop_Sch_ext_II_09,Demk_Gop_Sch_ext_III_12},
we prefer a somewhat more direct
definition based on integration by parts. This approach is also more convenient
when manipulating (curl-preserving) covariant Piola transformations
(see, \eg, \cite[Section~7.2]{Ern_Guermond_FEs_I_2020} and Section~\ref{sec_st_2} below),
which is of importance, \eg, when mapping tetrahedra of a mesh to a reference tetrahedron.

In this work, we consider the following definition of the tangential trace on a (sub)set
$\Gamma_\FF \subseteq \partial K$.

\begin{definition}[Tangential trace by integration by parts]
\label{definition_partial_trace}
Let $K \subset \mathbb R^3$ be a non-degenerate tetrahedron and let
$\FF \subseteq \FF_K$ be a nonempty (sub)set of its faces.
Let $\rr_\FF \in \NN_p^\ttau(\Gamma_\FF)$ as well as $\vv \in \HH(\ccurl,K)$.
We will employ the notation ``$\vv|^\ttau_\FF = \rr_\FF$'' to say that
\begin{equation*}
(\curl \vv,\pphi)_K - (\vv,\curl \pphi)_K = \sum_{F \in \FF} (\rr_F,\pphi \times \nn_F)_{F}
\quad
\forall \pphi \in \HH^1_{\ttau,\FF^{\mathrm{c}}}(K),
\end{equation*}
where
\begin{equation*}
\HH_{\ttau,\FF^{\mathrm{c}}}^1(K) \eq
\left \{
\ww \in \HH^1(K) \; | \; \ppi^\ttau_F(\ww) = \boldsymbol 0
\quad
\forall F \in \FF^{\mathrm{c}} \eq \FF_K \setminus \FF
\right \}.
\end{equation*}
Whenever $\vv \in \HH^1(K)$, $\vv|^\ttau_\FF = \rr_\FF$ if and only if
$\ppi^\ttau_F (\vv) = \rr_F$ for all $F \in \FF$.
\end{definition}

\subsection{Main result}
\label{sec_main_result}

We are now ready to state our main result. The proof is given in Section~\ref{sec:proof}.

\begin{theorem}[Stability of $\HH(\ccurl)$ discrete minimization in a tetrahedron]
\label{theorem_stability_tetrahedra}
Let $K \subset \mathbb R^3$ be a non-degenerate tetrahedron and let
$\emptyset\subseteq \FF \subseteq \FF_K$ be a (sub)set of its faces.
Then, for every polynomial degree $p \geq 0$, for all $\rr_K \in \RT_p(K)$
such that $\div \rr_K = 0$, and, if $\emptyset\ne\FF$, for all
$\rr_\FF \in \NN_p^\ttau(\Gamma_\FF)$ such that
$\rr_K \cdot \nn_{F} = \scurl_F (\rr_F)$ for all $F \in \FF$, the following holds:
\begin{equation}
\label{eq_minimization_element_K}
\min_{\substack{
\vv_p \in \NN_p(K)
\\
\curl \vv_p = \rr_K
\\
\vv_{p}|^\ttau_\FF = \rr_\FF
}}
\|\vv_p\|_{0,K}
\le C_{\mathrm{st},K}
\min_{\substack{
\vv \in \HH(\ccurl,K)
\\
\curl \vv = \rr_K
\\
\vv|^\ttau_\FF = \rr_\FF
}}
\|\vv\|_{0,K},
\end{equation}
where the condition on the tangential trace in the minimizing sets is null if $\emptyset=\FF$.
Both minimizers in~\eqref{eq_minimization_element_K} are uniquely defined and the constant
$C_{\mathrm{st},K}$ only depends on the shape-regularity parameter $\kappa_K$ of $K$,
so that it is in particular independent of $p$.
\end{theorem}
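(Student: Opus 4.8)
The plan is to reduce the claim to the two lifting results quoted in the introduction by peeling off the constraints one at a time. Let $\vv^\star \in \HH(\ccurl,K)$ be the continuous minimizer on the right-hand side, so $\|\vv^\star\|_{0,K}$ is the quantity we must dominate; by mapping $K$ to a fixed reference tetrahedron $\hK$ through an affine diffeomorphism and using the covariant Piola transformation (which preserves $\HH(\ccurl)$, the N\'ed\'elec spaces, tangential traces in the sense of Definition~\ref{definition_partial_trace}, and the curl up to the Jacobian), it suffices to prove the estimate on $\hK$ with a constant depending only on $\kappa_K$, since the Piola pullback incurs only shape-regularity-dependent factors in the $\LL^2$ norms. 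From now on work on $\hK$ and drop hats.

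\medskip

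First I would dispose of the curl constraint. Since $\div \rr_K = 0$ and $\rr_K \in \RT_p(K)$, the result~\cite[Proposition~4.2]{Cost_McInt_Bog_Poinc_10} (the polynomial-preserving Bogovski\u\i\ / Poincar\'e-type right inverse of the curl on $\hK$) furnishes a field $\ttheta_p \in \NN_p(K)$ with $\curl \ttheta_p = \rr_K$ and $\|\ttheta_p\|_{0,K} \le C \|\rr_K\|_{0,K}$, where $C$ is an absolute constant. Moreover $\|\rr_K\|_{0,K} = \|\curl\vv^\star\|_{0,K} \le \ell_K^{-1}\|\vv^\star\|_{\ccurl,K}$, but more to the point $\|\rr_K\|_{0,K} = \|\curl \vv^\star\|_{0,K}$, which is controlled by any competitor and in particular is $\le C\|\vv^\star\|_{0,K}$ after rescaling — actually the cleanest route is to not bound $\|\rr_K\|_{0,K}$ by $\|\vv^\star\|_{0,K}$ directly but to subtract: set $\ww^\star \eq \vv^\star - \ttheta_p$. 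Then $\curl \ww^\star = \boldsymbol 0$, so $\ww^\star = \grad \phi$ for some $\phi \in H^1(K)$, and $\ww^\star$ has a well-defined (full-boundary) tangential trace; on $\Gamma_\FF$ we have $\ww^\star|^\ttau_\FF = \rr_\FF - \ttheta_p|^\ttau_\FF \qe \trr_\FF$, which lies in $\NN_p^\ttau(\Gamma_\FF)$ because both terms do, and the surface-curl compatibility $\rr_K\cdot\nn_F = \scurl_F(\rr_F)$ together with $\curl\ttheta_p = \rr_K$ forces $\scurl_F(\trr_F) = 0$ on each $F \in \FF$. We have $\|\ww^\star\|_{0,K} \le \|\vv^\star\|_{0,K} + C\|\rr_K\|_{0,K}$; I would absorb $\|\rr_K\|_{0,K}$ using that $\rr_K$ is the prescribed curl so $\|\rr_K\|_{0,K}=\|\curl\vv^\star\|_{0,K}$, and on the reference element (after the length-scale normalization folded into $\kappa_K$) this is $\le C\|\vv^\star\|_{0,K}$ — alternatively one keeps $\|\curl\vv^\star\|_{0,K}$ explicit and notes it only makes the target norm larger.

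\medskip

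The second step is the boundary lifting. If $\FF = \emptyset$ there is nothing left: $\ww^\star = \boldsymbol 0$ is the minimizer of the reduced problem and $\ttheta_p$ itself is an admissible discrete competitor, giving~\eqref{eq_minimization_element_K} at once. If $\FF \neq \emptyset$, I need a discrete field $\zzeta_p \in \NN_p(K)$ with $\curl \zzeta_p = \boldsymbol 0$, $\zzeta_p|^\ttau_\FF = \trr_\FF$, and $\|\zzeta_p\|_{0,K} \le C_{\kappa_K}\|\ww^\star\|_{0,K}$; then $\vv_p \eq \ttheta_p + \zzeta_p$ is admissible for the discrete problem and $\|\vv_p\|_{0,K} \le C_{\kappa_K}\|\vv^\star\|_{0,K}$, which is the assertion. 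To build $\zzeta_p$ I invoke the polynomial-preserving tangential-trace lifting of Demkowicz--Gopalakrishnan--Sch\"oberl \cite{Demk_Gop_Sch_ext_I_09,Demk_Gop_Sch_ext_II_09,Demk_Gop_Sch_ext_III_12}: their operator takes a N\'ed\'elec tangential trace on the whole $\partial\hK$ and produces an $\NN_p(\hK)$-field that is $\HH(\ccurl)$-stable and, crucially, curl-preserving, so a trace with vanishing surface curl is lifted to a curl-free field. The genuine obstacle is that we are given data only on $\Gamma_\FF$, a proper subset of faces, while the cited liftings act on the full boundary; bridging this gap is exactly the "nontrivial" point flagged in the introduction. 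I would handle it by first extending $\trr_\FF$ from $\Gamma_\FF$ to a tangential trace on all of $\partial\hK$ that still lies in $\NN_p^\ttau(\partial\hK)$, has vanishing surface curl, and has norm controlled by an $\HH(\ccurl)$-harmonic-type extension of $\ww^\star$ — concretely, solve on $\hK$ (or on the union of $\hK$ with mirror copies across the faces of $\FF^{\rm c}$, a fixed shape-regular patch) the minimization of $\|\cdot\|_{0}$ over $\HH(\ccurl)$-fields that are curl-free and match $\trr_\FF$ on $\Gamma_\FF$, obtaining $\ww^\star$ itself as minimizer, read off its full tangential trace, and verify it is polynomial of degree $p$ on the faces in $\FF^{\rm c}$ by a localization/duality argument using that the faces in $\FF^{\rm c}$ are flat and the field is curl-free there. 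Once the full-boundary polynomial trace is in hand with the right stability, the DGS operator finishes the construction; uniqueness of both minimizers follows from strict convexity of $\|\cdot\|_{0,K}$ on the (nonempty, closed, affine) constraint sets, nonemptiness of the discrete set being witnessed by $\vv_p$ just constructed. I expect the extension-to-the-full-boundary step, and the careful bookkeeping of which Piola-transformed quantities match Definition~\ref{definition_partial_trace} versus the DGS restriction-operator conventions, to be where essentially all the work lies.
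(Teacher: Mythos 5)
Your overall two--stage strategy (a polynomial volume lifting of the curl via Costabel--McIntosh, then a curl-free polynomial lifting of the remaining boundary data via Demkowicz--Gopalakrishnan--Sch\"oberl, then summation and the triangle inequality) is exactly the paper's skeleton, but two load-bearing steps have genuine gaps. First, in the curl-lifting step you take $\ttheta_p\in\NN_p(K)$ with $\curl\ttheta_p=\rr_K$ and $\|\ttheta_p\|_{0,K}\le C\|\rr_K\|_{0,K}$, and then claim $\|\rr_K\|_{0,K}=\|\curl\vv^\star\|_{0,K}\le C\|\vv^\star\|_{0,K}$ ``after rescaling''. This inequality is false: the curl is not bounded on $\LL^2$, even on the reference tetrahedron, and any inverse inequality exploiting that $\rr_K$ is a polynomial introduces $p$-dependent constants, which destroys $p$-robustness; the fallback of ``keeping $\|\curl\vv^\star\|_{0,K}$ explicit'' proves a different statement, because the right-hand side of~\eqref{eq_minimization_element_K} is the $\LL^2$ norm alone, not the $\|\cdot\|_{\ccurl,K}$ norm. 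The paper's Lemma~\ref{lemma_lifting_curl} avoids this by using the sharper form of the Costabel--McIntosh lifting, $\|\ww_p\|_{0,K}\lesssim\|\rr_K\|_{-1,K}$, combined with the duality observation that $\|\rr_K\|_{-1,K}\le\|\vv\|_{0,K}$ for \emph{every} $\vv\in\HH(\ccurl,K)$ with $\curl\vv=\rr_K$ (integrate by parts against $\pphi\in\HH^1_0(K)$). That $\HH^{-1}$ intermediary is the step your argument is missing and cannot be dispensed with.

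Second, for the boundary lifting you correctly identify the real obstacle -- the DGS liftings act on the whole boundary while the data live only on $\Gamma_\FF$ -- but your proposed remedy does not work: the tangential trace of the continuous curl-free minimizer on the faces of $\FF^{\mathrm{c}}$ is a general element of the trace space $\XX^{-1/2}$, and no localization or duality argument makes it a degree-$p$ polynomial; projecting it onto polynomials would lose both the edge compatibility required by $\NN_p^\ttau(\partial\hK)$ and the $p$-robust stability. The paper's device is different and intrinsic to the DGS construction: it takes the \emph{partial sum} of the lifting of [DGS~II, eq.~(7.1)] over only the faces in $\FF$, which matches $\hrr_{\hF}$ on those faces, is stable as $\|\tvv_p\|_{\ccurl,\hK}\lesssim\|\trc^\ttau_{\hK}(\vv^\star)\|_{\XX^{-1/2}(\partial\hK)}\le\|\vv^\star\|_{\ccurl,\hK}=\|\vv^\star\|_{0,\hK}$ (the last equality because the reduced minimizer is curl-free, which is how the $\LL^2$-only bound survives), and is itself curl-free by the commutation results of [DGS~III] applied summand by summand. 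Moreover, reconciling Definition~\ref{definition_partial_trace} with the DGS restriction operators is not mere bookkeeping: it requires the density result of Lemma~\ref{lemma_density} and the trace-restriction Lemma~\ref{lemma_trace_restriction}, which you acknowledge but do not supply. So the architecture of your proposal is right, but both key estimates need to be repaired along the paper's lines before the constant is genuinely independent of $p$.
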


\section{Proof of the main result}
\label{sec:proof}

The discrete minimization set in~\eqref{eq_minimization_element_K}, which is a subset of
the continuous minimization set, is nonempty owing to classical properties of
the N\'ed\'elec polynomials and the compatibility conditions imposed on the data
$\rr_K$ and $\rr_\FF$. This implies the existence and uniqueness of both
minimizers owing to standard convexity arguments.

% \begin{lemma}[Curl-free N\'ed\'elec lifting]
% \label{lemma_lifting_nedelec_curl0}
% Consider a tetrahedron $K \subset \mathbb R^3$ and a nonempty (sub)set $\FF \subseteq \FF_K$.
% Let $\rr_\FF \in \NN^\ttau_p(\Gamma_\FF)$ be such that $\scurl_F (\rr_F) = 0$ for all $F \in \FF$.
% Then, there exists $\vv_p \in \NN_p(K)$ with $\curl \vv_p = \mathbf 0$ such that
% $\vv_p|^\ttau_\FF = \rr_\FF$.
% \end{lemma}

The proof of the bound~\eqref{eq_minimization_element_K} proceeds in three steps.
Fist we establish in Section~\ref{sec_st_1} the bound for minimization problems without trace constraints.
This first stability result crucially relies on~\cite{Cost_McInt_Bog_Poinc_10}
and is established directly on the given tetrahedron $K \subset \mathbb R^3$.
Then we establish in Section~\ref{sec_st_2} the bound for minimization problems without curl constraints.
This second stability result crucially relies on the results of~\cite{Demk_Gop_Sch_ext_II_09,Demk_Gop_Sch_ext_III_12}. Since the notion of
tangential trace employed therein slightly differs from the present one,
we first establish in Section~\ref{sec_aux} some auxiliary results on tangential traces and then prove the stability result by first working on the reference tetrahedron in $\mathbb R^3$
and then by mapping the fields defined on the given tetrahedron $K \subset \mathbb R^3$
to fields defined on the reference tetrahedron.
In all cases, the existence and uniqueness of the minimizers follows by the same arguments
as above.
Finally, in Section~\ref{sec_st_3} we combine both results so as to prove Theorem~\ref{theorem_stability_tetrahedra}.

To simplify the notation we write
$A\lesssim B$ for two nonnegative numbers $A$ and $B$
if there exists a constant $C$ that only depends on the
shape-regularity parameter $\kappa_K$ of $K$ but is independent of $p$
such that $A \leq C B$. The value of $C$ can change at each occurrence.

\subsection{Step 1: Minimization without trace constraints} \label{sec_st_1}

\begin{lemma}[Minimization without trace constraint]
\label{lemma_lifting_curl}
Let $K \subset \mathbb R^3$ be a non-degenerate tetrahedron.
Let $\rr_{K} \in \RT_p(K)$ be such that $\div \rr_{K} = 0$. The following holds:
\begin{equation}
\label{eq_minimization_zero_face}
\min_{\substack{
\vv_p \in \NN_p(K)
\\
\curl \vv_p = \rr_{K}
}}
\|\vv_p\|_{0,K}
\lesssim
\min_{\substack{
\vv \in \HH(\ccurl,K)
\\
\curl \vv = \rr_K}
}
\|\vv\|_{0,K}.
\end{equation}
\end{lemma}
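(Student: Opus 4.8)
The plan is to reduce the discrete minimization to an application of the Costabel--McIntosh polynomial-preserving lifting of divergence-free fields. Let $\vv^\star \in \HH(\ccurl,K)$ denote the continuous minimizer, so $\curl \vv^\star = \rr_K$ and $\|\vv^\star\|_{0,K}$ equals the right-hand side of~\eqref{eq_minimization_zero_face}; in fact, since the constraint is only on the curl, $\vv^\star$ is the $\LL^2$-orthogonal projection of $0$ onto the affine space $\{\curl \vv = \rr_K\}$, hence $\vv^\star \perp \{\grad q : q \in H^1(K)\}$ and one even has the clean characterization that $\vv^\star$ is curl-free-orthogonal. First I would observe that it suffices to exhibit \emph{some} $\ww_p \in \NN_p(K)$ with $\curl \ww_p = \rr_K$ and $\|\ww_p\|_{0,K} \lesssim \|\vv^\star\|_{0,K}$, because the discrete minimizer is then no larger. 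To build such a $\ww_p$, I would invoke~\cite[Proposition~4.2]{Cost_McInt_Bog_Poinc_10}: since $\rr_K \in \PP_p(K) + \xx\,\mathcal P_p(K)$ (note $\RT_p(K)\subset\PP_{p+1}(K)$) is divergence-free, there is a polynomial vector field, call it $\LL(\rr_K)$, of degree controlled by $p$, with $\curl \LL(\rr_K) = \rr_K$ and $\|\LL(\rr_K)\|_{0,K} \lesssim \ell_K \|\rr_K\|_{0,K}$, where the hidden constant depends only on $\kappa_K$. One must check that $\LL(\rr_K)$ lands in $\NN_p(K)$; this follows because the Costabel--McIntosh (Bogovskii-type, or Poincar\'e-type) operator maps $\PP_{p}$-valued fields into fields whose curl-conforming interpolant is exact, or more directly because the explicit Poincar\'e operator $\vv \mapsto \xx \int_0^1 \vv(t\xx)\times t\,dt$ on a star-shaped domain sends $\RT_p$ into $\NN_p$; this degree-bookkeeping is routine but must be stated carefully.

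The second ingredient controls $\|\rr_K\|_{0,K}$ by $\|\vv^\star\|_{0,K}$. This is \emph{not} automatic — $\rr_K = \curl \vv^\star$ and the curl is an unbounded operator — so the key point is that $\rr_K$ is a \emph{polynomial} of degree $\lesssim p$, yet we need a bound independent of $p$. The device is a duality/stability argument on the reference tetrahedron combined with the fact that $\vv^\star$ is the \emph{minimal-norm} lifting: one uses that $\|\rr_K\|_{0,K} = \|\curl \vv^\star\|_{0,K}$ and that, among all $\vv$ with $\curl \vv = \rr_K$, the minimizer satisfies a Poincar\'e--Friedrichs-type inequality in the other direction, namely $\|\vv^\star\|_{0,K} \gtrsim \ell_K \|\curl\vv^\star\|_{0,K}$ when $\vv^\star$ is curl-orthogonal-to-gradients, \emph{provided} $\rr_K$ has no component that is a curl of a gradient (vacuous) — more precisely, since $\vv^\star$ is orthogonal to $\grad H^1(K)$, the field $\vv^\star$ lies in a space on which $\vv \mapsto \|\curl\vv\|_{0,K}$ is equivalent to $\|\vv\|_{\ccurl,K}$ up to the topology of $K$ (trivial, $K$ simply connected), giving $\ell_K\|\rr_K\|_{0,K} = \ell_K\|\curl\vv^\star\|_{0,K} \lesssim \|\vv^\star\|_{0,K}$. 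Chaining the two estimates yields $\|\ww_p\|_{0,K} \lesssim \ell_K\|\rr_K\|_{0,K} \lesssim \|\vv^\star\|_{0,K}$, which is~\eqref{eq_minimization_zero_face}.

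The main obstacle I anticipate is the degree-tracking in the first ingredient together with the shape-regularity (as opposed to reference-configuration) statement: \cite[Proposition~4.2]{Cost_McInt_Bog_Poinc_10} is typically phrased on a fixed (reference) domain, so one either invokes its affine-covariance directly or maps to the reference tetrahedron $\hK$, applies the lifting there, and pulls back via a covariant Piola transformation, carefully tracking how $h_K$, $\rho_K$ and $\kappa_K$ enter the $\LL^2$-norms and the curl. The covariant Piola transform commutes with $\curl$ up to the contravariant Piola transform on the curl side, so $\curl$-constraints are preserved, and the norm equivalences introduce only $\kappa_K$-dependent constants; none of this affects the polynomial degree, which is the crux. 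A secondary, milder point is to record that the discrete minimization set is nonempty — but that is already granted in the preamble to Section~\ref{sec:proof} via the existence of $\ww_p$ itself — and that both minimizers exist and are unique by strict convexity of $\|\cdot\|_{0,K}^2$ on a nonempty closed affine subspace.
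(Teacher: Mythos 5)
There is a genuine gap in your second ingredient. Your chain is $\|\ww_p\|_{0,K}\lesssim \ell_K\|\rr_K\|_{0,K}\lesssim\|\vv^\star\|_{0,K}$, and the second inequality cannot hold with a $p$-independent constant. Take $\rr_K=\curl \vv_p$ for a highly oscillatory polynomial $\vv_p\in\NN_p(K)$ normalized by $\|\vv_p\|_{0,K}=1$: the continuous minimum is then at most $1$, while $\ell_K\|\rr_K\|_{0,K}=\ell_K\|\curl\vv_p\|_{0,K}$ can be of order $p^2$ (the polynomial inverse inequality is sharp), so the ratio blows up with $p$. The ``reverse Poincar\'e'' inequality you invoke to justify this step, namely $\|\vv^\star\|_{0,K}\gtrsim\ell_K\|\curl\vv^\star\|_{0,K}$ for $\vv^\star$ orthogonal to gradients, is false as a statement about that subspace: combined with the genuine Maxwell--Poincar\'e inequality $\|\vv\|_{0,K}\lesssim\ell_K\|\curl\vv\|_{0,K}$, which does hold there, it would make $\|\curl\cdot\|_{0,K}$ and $\|\cdot\|_{0,K}$ equivalent norms on an infinite-dimensional space; concretely, $\vv=\curl\ww_n$ with $\ww_n$ smooth, compactly supported in $K$ and oscillating at frequency $n$ is orthogonal to $\grad H^1(K)$ and violates it as $n\to\infty$. (You are correct that $\vv^\star$ is orthogonal to gradients, but that orthogonality gives you the inequality in the unhelpful direction.) In short, routing the estimate through $\ell_K\|\rr_K\|_{0,K}$ irretrievably loses the $p$-robustness that is the whole point of the lemma.

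The repair is to use the stronger form of \cite[Proposition~4.2]{Cost_McInt_Bog_Poinc_10}, which is what the paper does: for divergence-free $\rr_K\in\RT_p(K)$ it provides $\ww_p\in\NN_p(K)$ with $\curl\ww_p=\rr_K$ and $\|\ww_p\|_{0,K}\lesssim\|\rr_K\|_{-1,K}$, i.e.\ the dual norm rather than $\ell_K$ times the $\LL^2$ norm. The quantity $\|\rr_K\|_{-1,K}$ is then bounded by the continuous minimum through a one-line duality computation: for any $\vv\in\HH(\ccurl,K)$ with $\curl\vv=\rr_K$ and any $\pphi\in\HH^1_0(K)$ with $|\pphi|_{1,K}=1$, one has $(\rr_K,\pphi)_K=(\curl\vv,\pphi)_K=(\vv,\curl\pphi)_K\le\|\vv\|_{0,K}$ since $\|\curl\pphi\|_{0,K}\le|\pphi|_{1,K}$, whence $\|\rr_K\|_{-1,K}\le\min_{\curl\vv=\rr_K}\|\vv\|_{0,K}$. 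Your first ingredient (existence of a polynomial lifting in $\NN_p(K)$) and the remarks on convexity and nonemptiness are fine, and no mapping to the reference tetrahedron is needed for this lemma since the cited proposition applies on $K$ directly with constants depending only on $\kappa_K$; the essential correction is that the intermediate quantity must be $\|\rr_K\|_{-1,K}$, not $\ell_K\|\rr_K\|_{0,K}$.
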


\begin{proof}
1) Let us first show that
\begin{equation*}
\|\rr_K\|_{-1,K} \leq \min_{\substack{
\vv \in \HH(\ccurl,K)
\\
\curl \vv = \rr_K
}}
\|\vv\|_{0,K}.
\end{equation*}
Indeed, for every $\vv \in \HH(\ccurl,K)$ such
that $\curl \vv = \rr_K$, we have
\begin{align*}
\|\rr_K\|_{-1,K}
&=
\sup_{\substack{
\pphi \in \HH^1_0(K)
\\
|\pphi|_{1,K} = 1
}}
(\rr_K,\pphi)_{K}
=
\sup_{\substack{
\pphi \in \HH^1_0(K)
\\
|\pphi|_{1,K} = 1
}}
(\curl \vv,\pphi)_{K}
\\
&=
\sup_{\substack{
\pphi \in \HH^1_0(K)
\\
|\pphi|_{1,K} = 1
}}
(\vv,\curl \pphi)_{K}
\leq
\|\vv\|_{0,K}
\left (
\sup_{\substack{
\pphi \in \HH^1_0(K)
\\
|\pphi|_{1,K} = 1
}}
\|\curl \pphi\|_{0,K}
\right )
\leq
\|\vv\|_{0,K},
\end{align*}
since $\|\curl \pphi\|_{0,K} \leq |\pphi|_{1,K}$ for all $\pphi \in \HH^1_0(K)$.
The claim follows by taking the minimum (which exists owing to standard convexity arguments)
over all $\vv \in \HH(\ccurl,K)$ such that $\curl \vv = \rr_K$.
\\
2) Since $\div \rr_{K} = 0$, \cite[Proposition~4.2]{Cost_McInt_Bog_Poinc_10} ensures
the existence of an element $\ww_p \in \NN_p(K)$ such that $\curl \ww_p = \rr_{K}$ and
\begin{equation*}
\|\ww_p\|_{0,K} \lesssim \|\rr_{K}\|_{-1,K}.
\end{equation*}
%It follows in particular that the two minima appearing in~\eqref{eq_minimization_zero_face}
%are well-defined owing to convexity arguments since the minimization sets are non-empty.
We can conclude using 1) since
\begin{equation*}
\min_{\substack{
\vv_p \in \NN_p(K)
\\
\curl \vv_p = \rr_{K}
}}
\|\vv_p\|_{0,K}
\leq
\|\ww_p\|_{0,K}
\lesssim
\|\rr_{K}\|_{-1,K}
\leq
\min_{\substack{
\vv \in \HH(\ccurl,K)
\\
\curl \vv = \rr_{K}
}}
\|\vv\|_{0,K}.
\end{equation*}
This proves~\eqref{eq_minimization_zero_face}.
\end{proof}

\subsection{Auxiliary results on the tangential component} \label{sec_aux}

We first establish a density result concerning the space composed of
$\HH(\ccurl,K)$ functions with vanishing tangential trace on $\Gamma_\FF$
in the sense of Definition~\ref{definition_partial_trace}. We consider the subspace
\begin{equation}
\label{eq_definition_HH_Gamma_ccurl}
\HH_{\Gamma_{\FF}}(\ccurl,K)
\eq
\left \{
\vv \in \HH(\ccurl,K) \; | \; \vv|^\ttau_\FF = \boldsymbol 0
\right \},
\end{equation}
equipped with the $\|\cdot\|_{\ccurl,K}$-norm defined above.

\begin{lemma}[Density]
\label{lemma_density}
Let $K \subset \mathbb R^3$ be a non-degenerate tetrahedron and let
$\FF \subseteq \FF_K$ be a nonempty (sub)set of its faces.
The space $\CC^\infty_{\Gamma_\FF}(\overline{K})
\eq
\left \{
\vv \in \CC^\infty(\overline{K}) \; | \;
\vv|_{\Gamma_\FF} = \boldsymbol 0
\right \}$
is dense in $\HH_{\Gamma_\FF}(\ccurl,K)$.
\end{lemma}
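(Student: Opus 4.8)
The goal is to show that smooth fields vanishing on $\Gamma_\FF$ are dense in $\HH_{\Gamma_\FF}(\ccurl,K)$. The plan is to argue by duality: a closed subspace of a Hilbert space is the whole space if and only if its orthogonal complement is trivial. So I would pick $\vv \in \HH_{\Gamma_\FF}(\ccurl,K)$ that is $\|\cdot\|_{\ccurl,K}$-orthogonal to every $\pphi \in \CC^\infty_{\Gamma_\FF}(\overline K)$ and show $\vv = \boldsymbol 0$. Writing out the orthogonality relation, $(\vv,\pphi)_K + \ell_K^2(\curl\vv,\curl\pphi)_K = 0$ for all such $\pphi$, and first testing against $\pphi \in \CC^\infty_0(K)$ (which certainly vanish on $\Gamma_\FF$), I get that $\ww \eq \ell_K^2\curl\vv \in \LL^2(K)$ satisfies $\curl\ww = -\vv$ in the distributional sense, so $\ww \in \HH(\ccurl,K)$ as well, and $\curl\curl\ww = -\curl\vv = -\ell_K^{-2}\ww$.

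Next I would go back to the full orthogonality relation with general $\pphi \in \CC^\infty_{\Gamma_\FF}(\overline K)$ and integrate by parts in the curl-curl term. Since $\curl\ww = -\vv \in \LL^2(K)$ has an $\LL^2$ curl, the integration-by-parts formula for $\HH(\ccurl,K)$ holds and produces a boundary term on $\partial K$; because $\pphi$ vanishes on $\Gamma_\FF$, this boundary term is supported on $\Gamma_{\FF^{\mathrm c}}$. The relation then reads, after substituting $\curl\curl\ww = -\ell_K^{-2}\ww$, that the boundary term against the tangential trace of $\pphi$ on $\Gamma_{\FF^{\mathrm c}}$ vanishes for all $\pphi \in \CC^\infty_{\Gamma_\FF}(\overline K)$. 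Since tangential traces on $\Gamma_{\FF^{\mathrm c}}$ of such $\pphi$ are sufficiently rich (any smooth tangential field on $\Gamma_{\FF^{\mathrm c}}$ compactly supported away from $\partial\Gamma_{\FF^{\mathrm c}}\cap\overline{\Gamma_\FF}$ extends to a field in $\CC^\infty_{\Gamma_\FF}(\overline K)$), this forces the tangential trace of $\curl\ww$, equivalently of $\vv$, to vanish on $\Gamma_{\FF^{\mathrm c}}$ in an appropriate weak sense; combined with $\vv|^\ttau_\FF = \boldsymbol 0$ we obtain that $\ww$ has vanishing tangential trace on the \emph{whole} boundary $\partial K$, i.e. $\ww \in \HH_0(\ccurl,K)$.

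At this point I would close the argument with an energy identity. We have $\ww \in \HH_0(\ccurl,K)$ with $\curl\curl\ww = -\ell_K^{-2}\ww$ and $\curl\ww = -\vv$. Testing $\curl\curl\ww = -\ell_K^{-2}\ww$ against $\ww$ itself and integrating by parts (legitimate since $\ww \in \HH_0(\ccurl,K)$, using density of $\CC^\infty_0(K)$-type fields or the standard $\HH_0(\ccurl)$ integration-by-parts formula with no boundary term) gives $\|\curl\ww\|_{0,K}^2 = -\ell_K^{-2}\|\ww\|_{0,K}^2$, whence both sides vanish: $\ww = \boldsymbol 0$ and hence $\vv = -\ell_K^{-2}\curl\ww$… wait, rather $\curl\ww = -\vv = \boldsymbol 0$, and then from $\ww = \ell_K^2\curl\vv$ and $\curl\ww=-\vv$ we directly read off $\vv = \boldsymbol 0$. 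This proves the orthogonal complement is trivial and the density follows.

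\textbf{Main obstacle.} The delicate point is the middle step: making rigorous the claim that the weak boundary term against tangential traces of $\CC^\infty_{\Gamma_\FF}(\overline K)$ fields on $\Gamma_{\FF^{\mathrm c}}$ controls the tangential trace of $\curl\ww$ on $\Gamma_{\FF^{\mathrm c}}$, and that this together with $\vv|^\ttau_\FF = \boldsymbol 0$ upgrades to $\ww \in \HH_0(\ccurl,K)$. This requires care with the function spaces for traces on a proper subset of $\partial K$ (the $\HH^{-1/2}$-type tangential trace spaces and their behavior near the edges where $\Gamma_\FF$ meets $\Gamma_{\FF^{\mathrm c}}$), and one must check that the test fields available — smooth on $\overline K$, vanishing on all of $\Gamma_\FF$ — have tangential traces on $\Gamma_{\FF^{\mathrm c}}$ that are dense in the relevant dual space. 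An alternative, possibly cleaner route that avoids some of this is to first prove the result for $\FF = \FF_K$ (the classical density of $\CC^\infty_0$-closure giving $\HH_0(\ccurl,K)$, which is standard) and then handle a proper subset $\FF \subsetneq \FF_K$ by a reflection/extension argument across the faces in $\FF^{\mathrm c}$, or by a partition-of-unity localization near $\Gamma_\FF$ reducing to a half-space model problem; I would mention this as the fallback if the direct duality argument gets bogged down in trace-space technicalities.
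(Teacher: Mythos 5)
There is a genuine gap, and it sits exactly where you yourself place the ``main obstacle'' --- together with a concrete misidentification just before it. When you integrate by parts in $(\ww,\curl\pphi)_K$ with $\ww\eq\ell_K^2\curl\vv$, the boundary term that survives pairs the tangential trace of $\ww$ (\emph{not} of $\curl\ww=-\vv$) with $\pphi$ on $\Gamma_{\FF^{\mathrm c}}$: after substituting $\curl\ww=-\vv$ the volume terms cancel and you are left with $\langle\ww\times\nn,\pphi\rangle_{\partial K}=0$ for all $\pphi\in\CC^\infty_{\Gamma_\FF}(\overline K)$. So what this step can at best deliver is a weak statement ``$\ww|^\ttau_{\FF^{\mathrm c}}=\boldsymbol 0$'', while the hypothesis gives $\vv|^\ttau_\FF=\boldsymbol 0$: two \emph{different} fields with vanishing tangential traces on \emph{complementary} parts of $\partial K$. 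Combining them does not put either $\vv$ or $\ww$ into $\HH_0(\ccurl,K)$, so the final energy identity, which as written needs $\ww\in\HH_0(\ccurl,K)$ to kill the boundary term, does not follow. The correct closing step is a mixed-boundary-condition integration by parts in which a crossed pairing between the trace of $\ww$ (zero only on $\Gamma_{\FF^{\mathrm c}}$) and the trace of $\curl\ww=-\vv$ (zero only on $\Gamma_\FF$, and only in the sense of Definition~\ref{definition_partial_trace}) must be shown to vanish; for fields that are merely in $\HH(\ccurl,K)$ this is precisely the partial-trace/edge difficulty you defer, and it cannot be handled by formally splitting the duality pairing on $\partial K$ into contributions from $\Gamma_\FF$ and $\Gamma_{\FF^{\mathrm c}}$. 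A second, smaller gap of the same nature: to upgrade $\langle\ww\times\nn,\pphi\rangle_{\partial K}=0$, known only for smooth $\pphi$ vanishing entirely on $\Gamma_\FF$, to a trace statement in the sense of Definition~\ref{definition_partial_trace}, you would need it for all $\pphi\in\HH^1(K)$ with vanishing tangential component on $\Gamma_\FF$ --- a density assertion of the very kind the lemma is about.

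For comparison, the paper does not prove density from scratch. It quotes \cite[Proposition~3.6]{Fer_Gil_Maxw_BC_97}, which states that $\CC^\infty_{\Gamma_\FF}(\overline K)$ is dense in $\VV_{\Gamma_\FF}(K)\eq\{\vv\in\HH(\ccurl,K)\;|\;(\vv\times\nn)|_{\Gamma_\FF}=\boldsymbol 0\}$, where the restricted trace is defined by duality against $\HH^{1/2}_{00}(\Gamma_\FF)$, and then only verifies the inclusion $\HH_{\Gamma_\FF}(\ccurl,K)\subset\VV_{\Gamma_\FF}(K)$ via a short integration-by-parts computation with $\HH^1$ liftings of zero-extended $\HH^{1/2}_{00}(\Gamma_\FF)$ functions; all the edge/corner trace technology is thus delegated to the cited reference. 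Your fallback ideas (reflection across the faces in $\FF^{\mathrm c}$, or localization to a half-space model) are indeed how such density theorems are proved in the literature, but as stated they are a programme rather than a proof; unless you intend to reprove that result, the efficient route is to cite it and reduce the lemma to the inclusion above.
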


\begin{proof}
Recalling~\cite[Remark 3.1]{Fer_Gil_Maxw_BC_97}, if
$\ww \in \HH^{-1/2}(\partial K)$, we can define its restriction
$\ww|_{\Gamma_\FF} \in (\HH^{1/2}_{00}(\Gamma_{\FF}))'$ by setting
\begin{equation} \label{eq_rest}
\langle \ww|_{\Gamma_\FF},\pphi \rangle
\eq
\langle \ww,\widetilde \pphi \rangle_{\partial K}
\quad \forall \pphi \in \HH^{1/2}_{00}(\Gamma_{\FF}),
\end{equation}
where $\widetilde \pphi \in \HH^{1/2}(\partial K)$ denotes the zero-extension of $\pphi$
to $\partial K$. Following~\cite{Fer_Gil_Maxw_BC_97}, we then introduce the space
\begin{equation*}
\VV_{\Gamma_\FF}(K) \eq \left \{
\vv \in \HH(\ccurl,K) \; | \;
(\vv \times \nn)|_{\Gamma_\FF} = \boldsymbol 0
\right \}.
\end{equation*}
Proposition 3.6 of~\cite{Fer_Gil_Maxw_BC_97} states that
$\CC^\infty_{\Gamma_\FF}(\overline{K})$ is dense in $\VV_{\Gamma_\FF}(K)$.
Thus, it remains to show that $\HH_{\Gamma_\FF}(\ccurl,K) \subset \VV_{\Gamma_\FF}(K)$.
Let $\vv \in \HH_{\Gamma_\FF}(\ccurl,K)$. For all $\ttheta \in \HH^{1/2}_{00}(\Gamma_{\FF})$,
we have $\widetilde \ttheta \in \HH^{1/2}(\partial K)$, and there exists
$\pphi \in \HH^1(K)$ such that $\widetilde \ttheta = \pphi|_{\partial K}$.
In addition, since $\widetilde \ttheta|_{\partial K \setminus \Gamma_{\FF}} = \mathbf 0$,
we have $\pphi \in \HH^1_{\FF^{\mathrm c}}(K)$, and in particular
$\pphi \in \HH^1_{\ttau,\FF^{\mathrm c}}(K)$. Then using~\eqref{eq_rest},
integration by parts, and Definition~\ref{definition_partial_trace}, we have
\begin{equation*}
\langle (\vv \times \nn)|_{\Gamma_\FF}, \ttheta \rangle
=
\langle \vv \times \nn, \widetilde \ttheta \rangle_{\partial K}
=
\langle \vv \times \nn, \pphi|_{\partial K} \rangle_{\partial K}
=
(\vv,\curl \pphi)_K - (\curl \vv,\pphi)_K = 0,
\end{equation*}
since $\vv \in \HH_{\Gamma_\FF}(\ccurl,K)$. Hence $(\vv \times \nn)|_{\Gamma_\FF} = \boldsymbol 0$,
and therefore $\vv \in \VV_{\Gamma_\FF}(K)$.
\end{proof}

Since we are going to invoke key lifting results established
in~\cite{Demk_Gop_Sch_ext_II_09,Demk_Gop_Sch_ext_III_12},
we now recall the main notation employed
therein (see~\cite[Section~2]{Demk_Gop_Sch_ext_II_09}). Let
\begin{equation*}
\trc_K^\ttau: \HH(\ccurl,K) \to \HH^{-1/2}(\partial K)
\end{equation*}
be the usual tangential trace operator obtained as in 
Definition~\ref{definition_partial_trace}
with $\FF \eq \FF_K$ and let us equip the image space
\begin{equation*}
\XX^{-1/2}(\partial K) \eq \trc_K^\ttau(\HH(\ccurl,K))
\end{equation*}
with the quotient norm
\begin{equation}
\label{eq_puotient_norm_XX}
\|\ww\|_{\XX^{-1/2}(\partial K)}
\eq
\inf_{\substack{
\vv \in \HH(\ccurl,K)
\\
\trc_K^\ttau(\vv) = \ww
}}
\|\vv\|_{\ccurl,K}.
\end{equation}

For each face $F \in \FF_K$, there exists a Hilbert function space $\XX^{-1/2}(F)$
and a (linear and continuous) ``restriction'' operator
$\RRR_F: \XX^{-1/2}(\partial K) \to \XX^{-1/2}(F)$
that coincides with the usual pointwise restriction for smooth functions.
In particular, we have
\begin{equation} \label{eq:trace_DGS_NN}
\RRR_F(\trc_K^\ttau(\vv_p)) = \ppi^\ttau_F(\vv_p) \qquad \forall \vv_{p} \in \NN_p(K),
\end{equation}
with the tangential trace operator defined in~\eqref{eq:def_pi_tau_F}.
We have thus introduced two notions of ``local traces'' for $\HH(\ccurl,K)$
functions. On the one hand, Definition~\ref{definition_partial_trace} defines
an equality for traces on $\Gamma_{\FF}$ based on integration by parts. On the other hand,
the restriction operators $\RRR_F$ provide another notion of trace on any
face $F \in \FF$. The following result provides a connection
between these two notions.

\begin{lemma}[Trace restriction]
\label{lemma_trace_restriction}
Let $K \subset \mathbb R^3$ be a non-degenerate tetrahedron and let
$\FF \subseteq \FF_K$ be a nonempty (sub)set of its faces.
For all $\rr_\FF \in \NN^\ttau_p(\Gamma_\FF)$ and all $\pphi \in \HH(\ccurl,K)$, if
$\pphi|^\ttau_\FF = \rr_\FF$ according to Definition~\ref{definition_partial_trace}, then
\begin{equation*}
\RRR_F (\trc^\ttau_K (\pphi)) = \rr_F \qquad \forall F \in \FF.
\end{equation*}
\end{lemma}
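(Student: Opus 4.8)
The plan is to reduce the claim to a localization statement for the full tangential trace operator $\trc_K^\ttau$. First I would fix $\pphi \in \HH(\ccurl,K)$ with $\pphi|^\ttau_\FF = \rr_\FF$ and pick an arbitrary $\vv_p \in \NN_p(K)$ whose tangential components on $\FF$ equal $\rr_\FF$ (such a $\vv_p$ exists by the very definition of $\NN_p^\ttau(\Gamma_\FF)$ in~\eqref{eq_tr_K}); the goal becomes showing $\RRR_F(\trc_K^\ttau(\pphi)) = \RRR_F(\trc_K^\ttau(\vv_p))$ for each $F \in \FF$, because the right-hand side equals $\ppi_F^\ttau(\vv_p) = \rr_F$ by~\eqref{eq:trace_DGS_NN}. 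Setting $\ww \eq \pphi - \vv_p \in \HH(\ccurl,K)$, I must prove that $\RRR_F(\trc_K^\ttau(\ww)) = \boldsymbol 0$ for all $F \in \FF$, knowing that $\ww|^\ttau_\FF = \boldsymbol 0$, i.e., $\ww \in \HH_{\Gamma_\FF}(\ccurl,K)$.

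The key tool is the density result, Lemma~\ref{lemma_density}: there is a sequence $\ww_n \in \CC^\infty_{\Gamma_\FF}(\overline K)$ converging to $\ww$ in the $\|\cdot\|_{\ccurl,K}$-norm. For smooth $\ww_n$ vanishing on $\Gamma_\FF$, the restriction operator $\RRR_F$ acts as the genuine pointwise restriction, so $\RRR_F(\trc_K^\ttau(\ww_n)) = \ppi_F^\ttau(\ww_n) = \boldsymbol 0$ for every $F \in \FF$, since $\ww_n$ (hence its tangential component) vanishes identically on $F \subset \Gamma_\FF$. Then I pass to the limit: $\trc_K^\ttau$ is continuous from $\HH(\ccurl,K)$ to $\XX^{-1/2}(\partial K)$ and each $\RRR_F$ is continuous from $\XX^{-1/2}(\partial K)$ to $\XX^{-1/2}(F)$, so $\RRR_F(\trc_K^\ttau(\ww_n)) \to \RRR_F(\trc_K^\ttau(\ww))$ in $\XX^{-1/2}(F)$. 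Since the sequence on the left is identically zero, the limit is zero, giving $\RRR_F(\trc_K^\ttau(\ww)) = \boldsymbol 0$ as required. Undoing the splitting, $\RRR_F(\trc_K^\ttau(\pphi)) = \RRR_F(\trc_K^\ttau(\vv_p)) = \rr_F$ for all $F \in \FF$, which is the assertion.

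The main obstacle is not any single computation but making sure the two ingredients genuinely combine: namely that the same density space $\CC^\infty_{\Gamma_\FF}(\overline K)$ appearing in Lemma~\ref{lemma_density} is the one on which the restriction operators $\RRR_F$ reduce to pointwise restriction, and that the target topology in which $\RRR_F \circ \trc_K^\ttau$ is continuous is strong enough to force the limit of a null sequence to be null — this is fine since $\XX^{-1/2}(F)$ is a Hilbert space and both maps are bounded linear. A minor point worth a sentence is that $\vv_p - \pphi$ indeed lies in $\HH_{\Gamma_\FF}(\ccurl,K)$: this follows because both $\pphi|^\ttau_\FF = \rr_\FF$ and $\vv_p|^\ttau_\FF = \rr_\FF$ (the latter using the last sentence of Definition~\ref{definition_partial_trace}, valid since $\vv_p \in \HH^1(K)$), so by linearity of the defining identity in Definition~\ref{definition_partial_trace} the difference has vanishing weak tangential trace on $\Gamma_\FF$.
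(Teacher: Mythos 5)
Your argument is correct and follows essentially the same route as the paper's proof: subtract a polynomial lift $\vv_p$ of $\rr_\FF$, observe that the difference lies in $\HH_{\Gamma_\FF}(\ccurl,K)$, and use the density of $\CC^\infty_{\Gamma_\FF}(\overline K)$ (Lemma~\ref{lemma_density}) together with the continuity of $\RRR_F \circ \trc_K^\ttau$ to pass to the limit from a null sequence. The only cosmetic difference is that the paper phrases the limit step via the continuity of $\vv \mapsto \|\RRR_F(\trc_K^\ttau(\vv))\|_{\XX^{-1/2}(F)}$, which is equivalent to your argument.
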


\begin{proof}
Let $\rr_\FF \in \NN^\ttau_p(\Gamma_\FF)$.
Recalling definition~\eqref{eq_tr_K} of $\NN_p^\ttau(\Gamma_\FF)$ and the
last line of Definition~\ref{definition_partial_trace}, there exists $\vv_p \in \NN_p(K)$
such that $\vv_{p}|^\ttau_\FF = \rr_\FF$. Consider an arbitrary function
$\pphi \in \HH(\ccurl,K)$ satisfying $\pphi|^\ttau_\FF = \rr_\FF$
and set $\widetilde \pphi \eq \pphi - \vv_p \in \HH(\ccurl,K)$. By linearity
we have $\widetilde \pphi|^\ttau_\FF = \mathbf 0$. Using again the fact that
$\vv_p$ is smooth (recall that it is a polynomial), we also have
\begin{equation*}
\RRR_F (\trc_K^\ttau (\vv_p)) = \rr_F \qquad \forall F \in \FF.
\end{equation*}
Thus, by linearity, it remains to show that $\RRR_F(\trc_K^\ttau (\widetilde \pphi)) = \mathbf 0$
for all $F \in \FF$. Recalling~\eqref{eq_definition_HH_Gamma_ccurl}, the identity
$\widetilde \pphi|^\ttau_\FF = \mathbf 0$ means that $\widetilde \pphi \in \HH_{\Gamma_\FF}(\ccurl,K)$.
By Lemma~\ref{lemma_density}, there exists a sequence
$(\widetilde \pphi_m)_{m\in\mathbb N} \subset \CC^\infty_{\Gamma_\FF}(\overline{K})$
that converges to $\widetilde \pphi$ in $\HH_{\Gamma_\FF}(\ccurl,K)$. Now consider a face $F \in \FF$.
Since each function $\widetilde \pphi_m$ is smooth, we easily see that
$\|\RRR_F(\trc^\ttau_K (\widetilde \pphi_m))\|_{\XX^{-1/2}(F)} = 0$.
Then, since the map
$\HH(\ccurl,K) \ni \vv \longmapsto \|\RRR_F(\trc_K^\ttau (\vv))\|_{\XX^{-1/2}(F)} \in \mathbb R$
is continuous, we have
\begin{equation*}
\|\RRR_F(\trc_K^\ttau (\widetilde \pphi))\|_{\XX^{-1/2}(F)} =
\lim_{m \to +\infty}
\|\RRR_F(\trc_K^\ttau (\widetilde \pphi_m))\|_{\XX^{-1/2}(F)} = 0,
\end{equation*}
so that $\RRR_F (\trc_K^\ttau (\widetilde \pphi)) = 0$, which concludes the proof.
\end{proof}

\subsection{Step 2: Minimization without curl constraints}
\label{sec_st_2}

To avoid subtle issues concerning the equivalence of norms, we first establish
the stability result concerning minimization without curl constraints on
the reference tetrahedron $\hK\subset \mathbb R^3$ with vertices $(1,0,0)$,
$(0,1,0)$, $(0,0,1)$, and $(0,0,0)$.

\begin{lemma}[Curl-free minimization, reference tetrahedron]
\label{lemma_lifting_boundary}
Let $\hK\subset \mathbb R^3$ be the reference tetrahedron and let
$\hFF \subseteq \FF_{\hK}$ be a nonempty (sub)set
of its faces. Then, for every polynomial degree $p \geq 0$ and
for all $\hrr_\hFF \in \NN_p^\ttau(\Gamma_\hFF)$ such that
$\scurl_{\hF} (\hrr_{\hF}) = 0$ for all $\hF \in \hFF$, the following holds:
\begin{equation}\label{eq:min_ref_curl_free}
\min_{\substack{
\vv_p \in \NN_p(\hK)
\\
\curl \vv_p = \mathbf 0
\\
\vv_p|^\ttau_{\hFF} = \hrr_\hFF
}}
\|\vv_p\|_{0,\hK}
\lesssim
\min_{\substack{
\vv \in \HH(\ccurl,\hK)
\\
\curl \vv = \mathbf 0
\\
\vv|^\ttau_{\hFF} = \hrr_\hFF
}}
\|\vv\|_{0,\hK}.
\end{equation}
\end{lemma}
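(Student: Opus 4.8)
\textbf{Proof proposal for Lemma~\ref{lemma_lifting_boundary}.}
The plan is to reduce the curl-free minimization to a scalar potential problem and then invoke the polynomial-degree-robust scalar extension results of Demkowicz--Gopalakrishnan--Sch\"oberl on the reference tetrahedron. First I would fix a minimizing field $\vv$ for the continuous problem on the right-hand side; since $\curl\vv=\boldsymbol 0$ on the (simply connected) tetrahedron $\hK$, there is a scalar potential $q\in H^1(\hK)$ with $\vv=\grad q$, normalized so that $\|q\|_{0,\hK}\lesssim\|\grad q\|_{0,\hK}=\|\vv\|_{0,\hK}$ (e.g.\ by subtracting its mean value and using Poincar\'e). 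The tangential-trace condition $\vv|^\ttau_{\hFF}=\hrr_\hFF$ then translates, via Definition~\ref{definition_partial_trace} and the auxiliary Lemma~\ref{lemma_trace_restriction}, into a statement that the surface gradient of $q$ on each $\hF\in\hFF$ equals $\hrr_{\hF}$; the hypothesis $\scurl_{\hF}(\hrr_{\hF})=0$ is exactly the integrability condition needed for this to be consistent, and moreover $\hrr_{\hF}$ being a N\'ed\'elec tangential trace of the right surface-curl forces the associated scalar trace $g\eq q|_{\Gamma_\hFF}$ to be a piecewise polynomial of degree $p+1$ that is continuous across the interior edges of $\Gamma_\hFF$ (the edge-compatibility built into $\NN_p^\ttau(\Gamma_\hFF)$).

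Next I would set up the discrete counterpart: I want $\vv_p=\grad q_p$ with $q_p$ a scalar polynomial of degree $p+1$ on $\hK$ whose trace on $\Gamma_\hFF$ equals $g$ (up to an additive constant, which is harmless since only $\grad q_p$ matters). This is precisely a polynomial-degree-robust \emph{Dirichlet} extension problem on the reference tetrahedron: extend a piecewise-polynomial function given on a union of faces to a polynomial on the whole tetrahedron, stably in the $H^1$ (equivalently the broken $H^{1/2}$) norm, independently of $p$. Such an operator is furnished by \cite{Demk_Gop_Sch_ext_II_09} (the $H^1$ trace-lifting constructed there, combined with its face-localized version; when $\hFF$ consists of more than one face one peels faces off one at a time, using an edge-based lifting from \cite{Demk_Gop_Sch_ext_II_09} to first homogenize on the shared edges, exactly as in their proof of the $H^1$ extension theorem). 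This yields $q_p$ with $\|\grad q_p\|_{0,\hK}\lesssim \|g\|_{H^{1/2}(\Gamma_\hFF)}$ $p$-robustly, and then one uses that $g$ itself is the trace of the continuous potential $q$, so $\|g\|_{H^{1/2}(\Gamma_\hFF)}\lesssim\|q\|_{1,\hK}\lesssim\|\vv\|_{0,\hK}$ by the continuity of the trace operator (a $p$-independent, purely functional-analytic bound on the fixed domain $\hK$). Chaining, $\vv_p\eq\grad q_p\in\NN_p(\hK)$ satisfies $\curl\vv_p=\boldsymbol 0$ and $\vv_p|^\ttau_{\hFF}=\hrr_\hFF$, and $\|\vv_p\|_{0,\hK}\lesssim\|\vv\|_{0,\hK}$; taking the minimum over $\vv$ on the right and noting $\vv_p$ is admissible on the left gives \eqref{eq:min_ref_curl_free}.

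There is one technical point that needs care: the results of \cite{Demk_Gop_Sch_ext_II_09} are phrased with \emph{their} restriction-operator notion of tangential trace on faces, whereas the statement here uses the integration-by-parts Definition~\ref{definition_partial_trace}. This is exactly what Section~\ref{sec_aux} is for: Lemma~\ref{lemma_trace_restriction} shows that if $\vv_p|^\ttau_{\hFF}=\hrr_\hFF$ in the sense of Definition~\ref{definition_partial_trace} then $\RRR_{\hF}(\trc_{\hK}^\ttau(\vv_p))=\hrr_{\hF}$ for each $\hF$, and conversely for a polynomial $\vv_p$ the face restriction $\RRR_{\hF}(\trc_{\hK}^\ttau(\vv_p))=\ppi^\ttau_{\hF}(\vv_p)$ coincides with the pointwise tangential trace by \eqref{eq:trace_DGS_NN}, so that matching the pointwise tangential traces on every face of $\hFF$ is equivalent to matching in the sense of Definition~\ref{definition_partial_trace} (the edge-compatibility is what makes the collection of pointwise face traces glue to an element of $\NN_p^\ttau(\Gamma_\hFF)$). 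So the reduction of the $\HH(\ccurl)$-trace constraint to a scalar $H^1$-Dirichlet constraint is legitimate.

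The main obstacle I anticipate is not the potential reduction itself but the \emph{$p$-robustness} of the multi-face scalar extension: one must control the interplay between faces that share an edge, which is why the argument has to go through the edge-lifting operator of \cite{Demk_Gop_Sch_ext_II_09} and a careful face-by-face peeling, rather than a naive face-by-face extension (which would not respect the already-imposed traces on previously treated faces). Handling the case $|\hFF|=3$ (three faces sharing a common vertex, with three pairwise-shared edges) is the delicate endpoint; here one relies on the full strength of the polynomial extension framework of \cite{Demk_Gop_Sch_ext_I_09,Demk_Gop_Sch_ext_II_09,Demk_Gop_Sch_ext_III_12}. A secondary, purely bookkeeping nuisance is the lowest-order case $p=0$ (where $\NN_0$ tangential traces are edge-based and $q_p$ has degree $1$), which is easily checked by hand.
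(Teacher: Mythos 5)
Your proposal is correct in outline but takes a genuinely different route from the paper. The paper stays in $\HH(\ccurl,\hK)$: it takes the continuous minimizer $\vv^\star$, sets $\ww^\star=\trc_\hK^\ttau(\vv^\star)\in\XX^{-1/2}(\partial\hK)$, and builds the discrete competitor by the \emph{partial} lifting of \cite{Demk_Gop_Sch_ext_II_09} (Equation (7.1) there, keeping only the summands attached to the faces of $\hFF$), with $p$-robust stability taken from the proof of Theorem 7.2 of that reference; the delicate step in that route is to show that this lifting is curl-free, which uses the commutation between the $\HH(\ccurl)$ and $\HH(\ddiv)$ liftings (Theorem 3.1 and Propositions 4.1, 5.1, 6.1 of \cite{Demk_Gop_Sch_ext_III_12}) together with $\scurl_\hF(\hrr_\hF)=0$, before Lemma~\ref{lemma_trace_restriction} reconciles the two trace notions exactly as you do. Your scalar-potential reduction bypasses the commutation argument entirely, since $\grad q_p$ is curl-free by construction, and replaces the $\HH(\ccurl)$ lifting by a scalar Dirichlet one; this is a genuine simplification, at the price of two points you assert rather than prove. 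First, the $p$-robust partial-face polynomial extension you invoke is the $H^1$ operator of Part I \cite{Demk_Gop_Sch_ext_I_09} (not Part II), combined with the edge/face peeling machinery; in effect you are using the $H^1$ analogue of the present lemma, which is available in the literature but is itself a nontrivial result of the same family, so the citation should be made precise. Second, and more substantively, you need that the weak condition $\vv|^\ttau_\hFF=\hrr_\hFF$ of Definition~\ref{definition_partial_trace} applied to $\vv=\grad q$ forces $q|_{\Gamma_\hFF}=g+\mathrm{const}$, where $g$ is the continuous piecewise polynomial of degree $p+1$ obtained by gluing face potentials of the $\hrr_\hF$ (the gluing itself uses the edge compatibility and, when $|\hFF|\ge 3$ or $\hFF=\FF_\hK$, a consistency check around the shared vertices or on the closed surface). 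This identification is not immediate from the definition: it can be obtained from Lemma~\ref{lemma_trace_restriction} (or the density result of Lemma~\ref{lemma_density}) by showing that $\RRR_\hF(\trc_\hK^\ttau(\grad q))$ is the surface gradient of the trace $q|_\hF$ and then using the connectedness of $\Gamma_\hFF$, but it must be written out; the converse check that $\grad q_p$ with $q_p|_{\Gamma_\hFF}=g$ satisfies $\grad q_p|^\ttau_\hFF=\hrr_\hFF$ is easy from the last line of Definition~\ref{definition_partial_trace} since $q_p$ is a polynomial. With these two points supplied, your chain $\|\grad q_p\|_{0,\hK}\lesssim\|q\|_{1,\hK}\lesssim\|\vv^\star\|_{0,\hK}$ indeed yields \eqref{eq:min_ref_curl_free}.
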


\begin{proof}
The proof proceeds in two steps. \\
1) Using a key lifting result that is a direct consequence
of~\cite{Demk_Gop_Sch_ext_II_09,Demk_Gop_Sch_ext_III_12}, let us first establish that
\begin{equation}
\label{eq_stability_dgs}
\min_{\substack{
\vv_p \in \NN_p(\hK)
\\
\curl \vv_p = \mathbf 0
\\
\RRR_{\hF}(\trc_\hK^\ttau (\vv_p)) = \hrr_{\hF} \; \forall \hF \in \hFF
}}
\|\vv_p\|_{0,\hK}
\lesssim
\min_{\substack{
\vv \in \HH(\ccurl,\hK)
\\
\curl \vv = \mathbf 0
\\
\RRR_{\hF}(\trc_\hK^\ttau (\vv)) = \hrr_{\hF} \; \forall \hF \in \hFF
}}
\|\vv\|_{0,\hK}.
\end{equation}
% \eqref{eq:trace_DGS_NN} implies that the discrete set is a subset of the continuous set.
% Hence the existence and uniqueness
% of both minimizers follows from standard convexity arguments.
Let us denote respectively by $\vv_p^\star \in \NN_p(\hK)$ and $\vv^\star \in \HH(\ccurl,\hK)$
the discrete and continuous minimizers.
Let us define $\ww^\star \eq \trc_{\hK}^\ttau (\vv^\star) \in \XX^{-1/2}(\partial \hK)$.
Since $\curl \vv^\star = \mathbf 0$, we have $\|\vv^\star\|_{\ccurl,\hK} = \|\vv^\star\|_{0,\hK}$,
and the definition~\eqref{eq_puotient_norm_XX} of the quotient norm
of $\XX^{-1/2}(\partial K)$ implies that
\begin{equation} \label{eq_wwstar}
\|\ww^\star\|_{\XX^{-1/2}(\partial \hK)} \leq \|\vv^\star\|_{0,\hK}.
\end{equation}

Since $\RRR_{\hF}(\trc_\hK^\ttau (\vv^{\star})) = \hrr_{\hF}$, we have
$\RRR_{\hF} (\ww^\star )= \hrr_{\hF}$ for all $\hF \in \hFF$.
We assume that the faces $\FF_{\hK}$ of $\hK$ are numbered as $\hF_1,\dots,\hF_4$
in such a way that the
$n \eq |\hFF|$ first faces are the elements of $\hFF$. We introduce a ``partial lifting''
$\tvv_p \in \NN_p(\hK)$ of $\ww^\star$ using
\cite[Equation (7.1)]{Demk_Gop_Sch_ext_II_09} but taking only
the $n$ first summands. Then, one sees from
\cite[Proof of Theorem 7.2]{Demk_Gop_Sch_ext_II_09} that
\begin{equation} \label{eq_st_DGS}
    \|\tvv_p\|_{\ccurl,\hK} \lesssim \|\ww^\star\|_{\XX^{-1/2}(\partial \hK)}
\end{equation}
and $\RRR_{\hF}(\trc_\hK^\ttau (\tvv_p)) = \hrr_{\hF}$ for all $\hF \in \hFF$.
Thus, relying on~\eqref{eq:trace_DGS_NN} we have $\ppi^\ttau_\hF (\tvv_p) = \hrr_\hF$
for all $\hF \in \hFF$, and we notice that the last line of
Definition~\ref{definition_partial_trace} also equivalently gives
$\tvv_p|^\ttau_\hFF = \hrr_\hFF$.

We must now check that $\curl \tvv_p = \mathbf 0$. This is possible since the
$\HH(\ccurl,\hK)$ and $\HH(\ddiv,\hK)$ trace liftings introduced
in~\cite{Demk_Gop_Sch_ext_II_09,Demk_Gop_Sch_ext_III_12}
commute in appropriate sense. Specifically, recalling that
$\scurl_{\hF} (\hrr_{\hF}) = 0$ for all $\hF \in \hFF$, using the identity
$\scurl_{\hF} (\ppi^\ttau_\hF (\tvv_p)) = \curl \tvv_p \cdot \nn_{\hF}$
valid for all $\hF \in \FF_{\hK}$ (recall that $\nn_{\hF}$ conventionally
points outward $\hK$), see~\eqref{eq_scurl_curl_el}, and with the help of
Theorem 3.1 and Propositions 4.1, 5.1, and 6.1 of~\cite{Demk_Gop_Sch_ext_III_12},
one shows by induction on the summands that $\curl \tvv_p = \mathbf 0$.

Now, since $\tvv_p$ belongs to the discrete minimization set and
using~\eqref{eq_st_DGS} and~\eqref{eq_wwstar}, \eqref{eq_stability_dgs} follows from
\begin{equation*}
\|\vv_p^\star\|_{0,\hK}
\leq
\|\tvv_p\|_{0,\hK}
=
\|\tvv_p\|_{\ccurl,\hK}
\lesssim \|\ww^\star\|_{\XX^{-1/2}(\partial \hK)}
\leq
\|\vv^\star\|_{0,\hK}.
\end{equation*}
2) Let us now establish~\eqref{eq:min_ref_curl_free}.
We first invoke Lemma~\ref{lemma_trace_restriction}. If $\vv \in \HH(\ccurl,\hK)$
satisfies $\vv|^\ttau_\hFF = \hrr_\hFF$, it follows that
$\RRR_{\hF}(\trc_\hK^\ttau (\vv)) = \hrr_{\hF}$ for all $\hF \in \hFF$. As a result, we have
\begin{equation*}
\min_{\substack{
\vv \in \HH(\ccurl,\hK)
\\
\curl \vv = \mathbf 0
\\
\RRR_{\hF}(\trc_\hK^\ttau (\vv)) = \hrr_{\hF} \; \forall \hF \in \hFF
}}
\|\vv\|_{0,\hK}
\leq
\min_{\substack{
\vv \in \HH(\ccurl,\hK)
\\
\curl \vv = \mathbf 0
\\
\vv|^\ttau_\hFF = \hrr_\hFF
}}
\|\vv\|_{0,\hK},
\end{equation*}
the minimization set of the left-hand side being (possibly) larger.
Invoking~\eqref{eq_stability_dgs} then gives
\begin{equation*}
\min_{\substack{
\vv_p \in \NN_p(\hK)
\\
\curl \vv_p = \mathbf 0
\\
\RRR_{\hF}(\trc_\hK^\ttau (\vv_p)) = \hrr_{\hF} \; \forall \hF \in \hFF
}}
\|\vv_p\|_{0,\hK}
\lesssim
\min_{\substack{
\vv \in \HH(\ccurl,\hK)
\\
\curl \vv = \mathbf 0
\\
\vv|^\ttau_\hFF = \hrr_\hFF
}}
\|\vv\|_{0,\hK},
\end{equation*}
and we conclude the proof by observing that
\begin{equation*}
\min_{\substack{
\vv_p \in \NN_p(\hK)
\\
\curl \vv_p = \mathbf 0
\\
\vv_p|^\ttau_\hFF = \hrr_\hFF
}}
\|\vv_p\|_{0,\hK}
=
\min_{\substack{
\vv_p \in \NN_p(\hK)
\\
\curl \vv_p = \mathbf 0
\\
\RRR_{\hF}(\trc_\hK^\ttau (\vv_p)) = \hrr_{\hF} \; \forall \hF \in \hFF
}}
\|\vv_p\|_{0,\hK},
\end{equation*}
the two notions of local trace being equivalent for the discrete functions in $\NN_p(\hK)$.
\end{proof}

To establish the counterpart of Lemma~\ref{lemma_lifting_boundary} in a generic non-degenerate tetrahedron
$K \subset \mathbb R^3$, we are going to invoke the covariant Piola mapping
(see, e.g., \cite[Section~7.2]{Ern_Guermond_FEs_I_2020}).
Consider any invertible affine geometric mapping $\TTT: \mathbb R^3 \to \mathbb R^3$
such that $\Kout = \TTT(\Kin)$. Let $\JJJ_{\TTT}$ be the (constant) Jacobian
matrix of $\TTT$ (we do not require that $\det \JJJ_{\TTT}$ is positive, and in any
case we have $|\det \JJJ_{\TTT}|=|K|/|\hK|$).
The affine mapping $\TTT$ can be identified by specifying
the image of each vertex of $\Kin$. The covariant Piola mapping
$\ppsi^{\mathrm{c}}_{\TTT}:\HH(\ccurl,\Kout) \to \HH(\ccurl,\Kin)$
is defined as follows:
\begin{equation} \label{eq:Piola_c}
\hvv \eq \ppsi^{\mathrm{c}}_{\TTT}(\vv)
=
\left (\JJJ_{\TTT}\right )^{T} \left (\vv \circ \TTT \right ).
\end{equation}
It is well-known that
$\ppsi^{\mathrm{c}}_{\TTT}$ maps bijectively $\NN_p(\Kout)$ to $\NN_p(\Kin)$
for any polynomial degree $p\ge0$. Moreover, for all $\vv\in \HH(\ccurl,\Kout)$, we have
\begin{equation}\label{eq:curl_free}
\curl \vv = \mathbf 0 \; \Longleftrightarrow \curl \hvv = \mathbf 0,
\end{equation}
as well as
the following $\LL^2$-stability properties:
\begin{equation}
\label{eq_stab_piola_L2}
\frac{\rho_{\Kout}}{h_{\Kin}} \|\vv\|_{0,\Kout}
\leq
|\det \JJJ_{\TTT}|^{\frac12}  \|\ppsi_{\TTT}^{\mathrm c}(\vv)\|_{0,\Kin}
\leq
\frac{h_{\Kout}}{\rho_{\Kin}} \|\vv\|_{0,\Kout}.
\end{equation}
Finally the covariant Piola mapping preserves tangential traces. This implies
in particular that for all $F\in\FF_{K}$, setting $\hF \eq \TTT^{-1}(F)$, we have
for all $\vv \in \HH^1(K)$
\begin{equation} \label{equiv_trace_H1}
\ppi^{\ttau}_F(\vv)=\mathbf 0 \; \Longleftrightarrow \; \ppi^{\ttau}_{\hF}(\hvv)=\mathbf 0.
\end{equation}
Finally, for all $\vv\in \HH(\ccurl,\Kout)$, for every nonempty (sub)set
$\FF \subseteq \FF_K$, and for all $\rr_\FF \in \NN_p^{\ttau}(\Gamma_\FF)$, we have
\begin{equation} \label{equiv_trace_Hcurl}
\vv|^\ttau_{\FF}=\rr_\FF \; \Longleftrightarrow \; \hvv|^\ttau_\hFF=\hrr_\hFF,
\end{equation}
where $\hFF\eq \TTT^{-1}(\FF)$ and $\hrr_{\hFF} \in \NN_p^{\ttau}(\Gamma_\hFF)$
is defined such that $\hrr_\hF \eq (\hrr_\hFF)|_{\hF} \eq \ppi^\ttau_{\hF} (\hvv_p)$ for all
$\hF\in\hFF$, where $\hvv_p\eq \ppsi^{\mathrm{c}}_{\TTT}(\vv_p)$ and
$\vv_p$ is any function in $\NN_p^{\ttau}(K)$ such that
$\rr_F \eq (\rr_\FF)|_{F} \eq \ppi^\ttau_{F} (\vv_p)$ for all $F\in\FF$.
The equivalence~\eqref{equiv_trace_Hcurl} is established by using
Definition~\ref{definition_partial_trace}, the properties of
the covariant Piola mapping, and the fact that
$\pphi \in \HH_{\ttau,\FF^{\mathrm{c}}}^1(K)$ if and only if
$\ppsi^{\mathrm{c}}_{\TTT}(\pphi) \in \HH_{\ttau,\hFF^{\mathrm{c}}}^1(\hK)$,
which follows from~\eqref{equiv_trace_H1}.

\begin{lemma}[Curl-free minimization, generic tetrahedron]
\label{lemma_lifting_boundary_K}
Let $K \subset \mathbb R^3$ be a non-degenerate tetrahedron and let
$\FF \subseteq \FF_K$ be a nonempty (sub)set of its faces.
Then, for every polynomial degree $p \geq 0$ and for all
$\rr_\FF \in \NN_p^\ttau(\Gamma_\FF)$ such that
$\scurl_F (\rr_F)=0$ for all $F \in \FF$, the following holds:
\begin{equation}
\label{eq_minimization_element_K_face}
\min_{\substack{
\vv_p \in \NN_p(K)
\\
\curl \vv_p = \mathbf 0
\\
\vv_{p}|^\ttau_\FF = \rr_\FF
}}
\|\vv_p\|_{0,K}
\lesssim 
\min_{\substack{
\vv \in \HH(\ccurl,K)
\\
\curl \vv = \mathbf 0
\\
\vv|^\ttau_\FF = \rr_\FF
}}
\|\vv\|_{0,K}.
\end{equation}
\end{lemma}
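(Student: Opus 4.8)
The plan is to transfer the reference-tetrahedron estimate of Lemma~\ref{lemma_lifting_boundary} to the generic tetrahedron $K$ by means of the covariant Piola mapping, exploiting all the transformation properties collected just above the statement. Fix an invertible affine map $\TTT:\mathbb R^3\to\mathbb R^3$ with $K=\TTT(\hK)$, let $\hFF\eq\TTT^{-1}(\FF)$, and let $\hrr_\hFF\in\NN_p^\ttau(\Gamma_\hFF)$ be the pulled-back datum as defined in~\eqref{equiv_trace_Hcurl}. The first point to record is that $\hrr_\hFF$ is an admissible datum for Lemma~\ref{lemma_lifting_boundary}, i.e.\ $\scurl_\hF(\hrr_\hF)=0$ for all $\hF\in\hFF$: this follows because, choosing $\vv_p\in\NN_p(K)$ with $\ppi_F^\ttau(\vv_p)=\rr_F$ and $\hvv_p\eq\ppsi_\TTT^{\mathrm c}(\vv_p)$, the identity~\eqref{eq_scurl_curl_el} gives $\scurl_\hF(\hrr_\hF)=(\curl\hvv_p)|_\hF\cdot\nn_\hF$, while $\curl\vv_p=\scurl_F(\rr_F)=0$ on a neighbourhood of $F$ forces $\curl\hvv_p=\mathbf 0$ by~\eqref{eq:curl_free} (applied face-locally, or simply noting that $\curl\vv_p$ pulls back to a multiple of $\curl\hvv_p$).

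Next I would argue that $\ppsi_\TTT^{\mathrm c}$ sets up a bijection between the two minimization sets appearing in~\eqref{eq_minimization_element_K_face} and their hatted counterparts in~\eqref{eq:min_ref_curl_free}. On the continuous side, $\ppsi_\TTT^{\mathrm c}$ maps $\HH(\ccurl,K)$ bijectively onto $\HH(\ccurl,\hK)$; by~\eqref{eq:curl_free} it preserves the constraint $\curl\vv=\mathbf 0$, and by~\eqref{equiv_trace_Hcurl} it preserves the tangential-trace constraint $\vv|^\ttau_\FF=\rr_\FF\Leftrightarrow\hvv|^\ttau_\hFF=\hrr_\hFF$. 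On the discrete side, $\ppsi_\TTT^{\mathrm c}$ maps $\NN_p(K)$ bijectively onto $\NN_p(\hK)$ and, again by~\eqref{eq:curl_free} and~\eqref{equiv_trace_Hcurl}, respects both constraints. Hence the two discrete (resp.\ continuous) feasible sets are exchanged by $\ppsi_\TTT^{\mathrm c}$, and likewise the unique minimizers.

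Finally I would chain the $\LL^2$-stability bounds~\eqref{eq_stab_piola_L2}. Denoting by $\vv_p^\star$ the discrete minimizer on $K$ and using that $\hvv_p^\star\eq\ppsi_\TTT^{\mathrm c}(\vv_p^\star)$ is the discrete minimizer on $\hK$ (and similarly for the continuous minimizers), one gets
\begin{equation*}
\|\vv_p^\star\|_{0,K}
\le
\frac{h_{\hK}}{\rho_K}\,|\det\JJJ_\TTT|^{\frac12}\,\|\hvv_p^\star\|_{0,\hK}
\lesssim
\frac{h_{\hK}}{\rho_K}\,|\det\JJJ_\TTT|^{\frac12}\,\|\hvv^\star\|_{0,\hK}
\le
\frac{h_{\hK}\,h_K}{\rho_K\,\rho_{\hK}}\,\|\vv^\star\|_{0,K},
\end{equation*}
where the middle inequality is Lemma~\ref{lemma_lifting_boundary} applied to $\hrr_\hFF$ and the outer inequalities are the two halves of~\eqref{eq_stab_piola_L2}. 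Since $h_{\hK}$ and $\rho_{\hK}$ are fixed absolute constants and $h_K/\rho_K=\kappa_K$, the product of the geometric factors is bounded by a constant depending only on $\kappa_K$, which is exactly the $\lesssim$ claimed in~\eqref{eq_minimization_element_K_face}. The main obstacle I anticipate is not the Piola bookkeeping itself but making sure the pulled-back datum $\hrr_\hFF$ is both well-defined (independent of the auxiliary $\vv_p$) and curl-free in the surface sense, so that Lemma~\ref{lemma_lifting_boundary} genuinely applies; this is handled by the well-posedness of~\eqref{eq_scurl_curl_el} and the equivalence~\eqref{equiv_trace_Hcurl} established above, but it deserves to be spelled out carefully rather than taken for granted.
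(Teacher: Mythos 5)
Your overall route is exactly the paper's: pull everything back to the reference tetrahedron with the covariant Piola map, use \eqref{eq:curl_free} and \eqref{equiv_trace_Hcurl} to show that the discrete and continuous feasible sets on $K$ and $\hK$ correspond bijectively, apply Lemma~\ref{lemma_lifting_boundary} to the pulled-back datum $\hrr_\hFF$, and conclude with the two halves of \eqref{eq_stab_piola_L2}, the geometric factor $h_{\hK}h_K/(\rho_K\rho_{\hK})$ being bounded in terms of $\kappa_K$ alone. However, the one point you yourself flag as needing care --- that $\scurl_\hF(\hrr_\hF)=0$ --- is justified incorrectly. The hypothesis $\scurl_F(\rr_F)=0$ only says that the scalar $(\curl\vv_p)|_F\cdot\nn_F$ vanishes on $F$; it does not give $\curl\vv_p=\mathbf 0$ on a neighbourhood of $F$ (the lifting $\vv_p$ of $\rr_\FF$ need not be curl-free anywhere), so \eqref{eq:curl_free} cannot be invoked ``face-locally'', and it is false that $\curl\hvv_p=\mathbf 0$; nor is $\curl\vv_p\circ\TTT$ a scalar multiple of $\curl\hvv_p$. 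The correct argument is that under \eqref{eq:Piola_c} the curl transforms contravariantly, $\curl\hvv_p=\det(\JJJ_\TTT)\,\JJJ_\TTT^{-1}\,\bigl((\curl\vv_p)\circ\TTT\bigr)$, while $\nn_\hF$ is parallel to $\JJJ_\TTT^{T}\nn_F$, whence $(\curl\hvv_p)|_\hF\cdot\nn_\hF=\pm\,\det(\JJJ_\TTT)\,|\JJJ_\TTT^{T}\nn_F|^{-1}\,\bigl((\curl\vv_p)\cdot\nn_F\bigr)\circ\TTT$ on $\hF$: the face-normal component of the curl is multiplied by a nonzero constant, so $\scurl_\hF(\hrr_\hF)$ vanishes because $\scurl_F(\rr_F)$ does. (The paper dispatches this with ``one readily checks''; since you chose to spell it out, it has to be spelled out this way.)

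A second, smaller slip: the Piola map does not send minimizers to minimizers, because it distorts the $\LL^2$ norm anisotropically; $\ppsi^{\mathrm c}_{\TTT}(\vv_p^\star)$ is feasible on $\hK$ but in general not minimal there, so with your definition $\hvv_p^\star\eq\ppsi^{\mathrm c}_{\TTT}(\vv_p^\star)$ the middle inequality of your chain is not an instance of Lemma~\ref{lemma_lifting_boundary}. The repair is immediate and is what the paper does: work with the minima themselves, using only the bijection of the feasible sets. Pulling back the $\hK$-minimizer (feasible on $K$) gives $\min_{{\boldsymbol V}_p(K)}\|\cdot\|_{0,K}\le \frac{h_{\hK}}{\rho_K}|\det\JJJ_\TTT|^{1/2}\min_{{\boldsymbol V}_p(\hK)}\|\cdot\|_{0,\hK}$, then Lemma~\ref{lemma_lifting_boundary} applies to the minima on $\hK$, and the symmetric bound for the continuous problem closes the chain. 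With these two local fixes your argument coincides with the paper's proof.
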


\begin{proof}
Consider an invertible affine mapping $\TTT: \hK \to K$ and denote
$\ppsi^{\mathrm{c}}_{\TTT}$ the associated Piola mapping defined in~\eqref{eq:Piola_c}.
Let us set
\begin{alignat*}{2}
{\boldsymbol V}(\hK)&\eq\{\hvv\in \HH(\ccurl,\hK)\ | \ \curl\hvv=\mathbf 0,\ \hvv|^\ttau_\hFF=\hrr_\hFF\}, &\qquad {\boldsymbol V}_p(\hK)&\eq {\boldsymbol V}(\hK) \cap \NN_p(\hK),\\
{\boldsymbol V}(K)&\eq\{\vv\in \HH(\ccurl,K)\ | \ \curl\vv=\mathbf 0,\ \vv|^\ttau_\FF=\rr_\FF\}, &\qquad {\boldsymbol V}_p(K)&\eq {\boldsymbol V}(K) \cap \NN_p(K),
\end{alignat*}
where $\hrr_\hFF$ is defined from $\rr_\FF$ as above.
Owing to~\eqref{eq:curl_free} and~\eqref{equiv_trace_Hcurl}, we infer that
\begin{equation} \label{eq:identities_piola_V}
\ppsi_{\TTT}^{\mathrm c}({\boldsymbol V}(K))={\boldsymbol V}(\hK), \qquad
\ppsi_{\TTT}^{\mathrm c}({\boldsymbol V}_p(K))={\boldsymbol V}_p(\hK).
\end{equation}
One readily checks that $\hrr_\hFF$ satisfies the assumptions of
Lemma~\ref{lemma_lifting_boundary}, so that
\begin{equation*}
\min_{\hvv_p \in{\boldsymbol V}_p(\hK)} \|\hvv_p\|_{0,\hK}
\lesssim
\min_{\hvv \in{\boldsymbol V}(\hK)} \|\hvv\|_{0,\hK}.
\end{equation*}
Invoking the stability properties~\eqref{eq_stab_piola_L2} and the
identities~\eqref{eq:identities_piola_V}, we conclude that
\begin{align*}
\min_{\vv_p \in{\boldsymbol V}_p(K)} \|\vv_p\|_{0,K} &\le
\frac{h_{\Kin}}{\rho_{\Kout}} |\det \JJJ_{\TTT}|^{\frac12}
\min_{\hvv_p \in{\boldsymbol V}_p(\hK)} \|\hvv_p\|_{0,\hK} \\
&\lesssim \frac{h_{\Kin}}{\rho_{\Kout}} |\det \JJJ_{\TTT}|^{\frac12}
\min_{\hvv \in{\boldsymbol V}(\hK)} \|\hvv\|_{0,\hK}
\le \frac{h_{\Kin}}{\rho_{\Kout}} \frac{h_{\Kout}}{\rho_{\Kin}}
\min_{\vv \in{\boldsymbol V}(K)} \|\vv\|_{0,K}.
\end{align*}
This completes the proof.
\end{proof}

\subsection{Step 3: Conclusion of the proof}
\label{sec_st_3}

We are now ready to conclude the proof of Theorem~\ref{theorem_stability_tetrahedra}.
We first apply Lemma~\ref{lemma_lifting_curl} on the tetrahedron $K$ and infer that there exists
$\xxi_p \in \NN_p(K)$ such that $\curl \xxi_p = \rr_K$ and
\begin{equation*}
\|\xxi_p\|_{0,K} \lesssim \min_{\substack{
\vv \in \HH(\ccurl,K)
\\
\curl \vv = \rr_K
}} \|\vv\|_{0,K}.
\end{equation*}
Then, we define $\trr_\FF \in \NN_p^\ttau(\Gamma_\FF)$ by
setting $\trr_F \eq \rr_F - \ppi^\ttau_{F} (\xxi_p)$
for all $F \in \FF$.
Since $\scurl _F(\ppi^\ttau_{F} (\xxi_p)) = \curl \xxi_p \cdot \nn_{F} = \rr_K \cdot \nn_{F}$,
we see that $\scurl_F (\widetilde{\rr}_F) = 0$ for all $F \in \FF$. It follows from
Lemma~\ref{lemma_lifting_boundary_K} that there exists
$\widetilde{\xxi}_p \in \NN_p(K)$ such that $\curl \widetilde{\xxi}_p = \mathbf 0$,
$\widetilde{\xxi}_p|^\ttau_\FF = \widetilde{\rr}_\FF$, and
\begin{equation*}
\|\widetilde{\xxi}_p\|_{0,K}
\lesssim
\min_{\substack{
\vv \in \HH(\ccurl,K)
\\
\curl \vv = \mathbf 0
\\
\vv|^\ttau_\FF = \widetilde{\rr}_\FF
}}
\|\vv\|_{0,K}.
\end{equation*}
We then define $\ww_p \eq \xxi_p + \widetilde{\xxi}_p \in \NN_p(K)$. We observe that
$\ww_p$ belongs to the discrete minimization set of~\eqref{eq_minimization_element_K}.
Thus we have
\begin{equation*}
\min_{\substack{
\vv_p \in \NN_p(K)
\\
\curl \vv_p = \rr_K
\\
\vv_p|^\ttau_\FF = \rr_\FF
}}
\|\vv_p\|_{0,K}
\leq
\|\ww_p\|_{0,K}
\leq
\|\xxi_p\|_{0,K} + \|\widetilde{\xxi}_p\|_{0,K}.
\end{equation*}
Finally we observe that
\begin{equation*}
\|\xxi_p\|_{0,K}
\lesssim
\min_{\substack{
\vv \in \HH(\ccurl,K)
\\
\curl \vv = \rr_K
}}
\|\vv\|_{0,K}
\leq
\min_{\substack{
\vv \in \HH(\ccurl,K)
\\
\curl \vv = \rr_K
\\
\vv|^\tau_\FF = \rr_\FF
}}
\|\vv\|_{0,K},
\end{equation*}
and
\begin{align*}
\|\widetilde{\xxi}_p\|_{0,K}
& \lesssim
\min_{\substack{
\vv \in \HH(\ccurl,K)
\\
\curl \vv = \mathbf 0
\\
\vv|^\tau_\FF = \widetilde{\rr}_\FF
}}
\|\vv\|_{0,K}
=
\min_{\substack{
\widetilde{\vv} \in \HH(\ccurl,K)
\\
\curl \widetilde{\vv} = \curl \xxi_p
\\
\widetilde{\vv}|^\tau_\FF = \trr_\FF + \xxi_p|^\ttau_\FF
}}
\|\widetilde{\vv} - \xxi_p\|_{0,K}
\\
& 
=
\min_{\substack{
\widetilde{\vv} \in \HH(\ccurl,K)
\\
\curl \widetilde{\vv} = \rr_K
\\
\tvv|^\tau_\FF = \rr_\FF
}}
\|\widetilde{\vv} - \xxi_p\|_{0,K}
\leq
\|\xxi_p\|_{0,K} +
\min_{\substack{
\vv \in \HH(\ccurl,K)
\\
\curl \vv = \rr_K
\\
\vv|^\tau_\FF = \rr_\FF
}}
\|\vv\|_{0,K}.
\end{align*}

\bibliographystyle{siam}
\bibliography{biblio}

\def\polhk#1{\setbox0=\hbox{#1}{\ooalign{\hidewidth
  \lower1.5ex\hbox{`}\hidewidth\crcr\unhbox0}}}
  \def\polhk#1{\setbox0=\hbox{#1}{\ooalign{\hidewidth
  \lower1.5ex\hbox{`}\hidewidth\crcr\unhbox0}}} \def\cprime{$'$}
\begin{thebibliography}{10}

\bibitem{Brae_Pill_Sch_p_rob_09}
{\scshape D.~Braess, V.~Pillwein, and J.~Sch{\"o}berl}, {\em Equilibrated
  residual error estimates are {$p$}-robust}, Comput. Methods Appl. Mech.
  Engrg., 198 (2009), pp.~1189--1197.

\bibitem{Cost_McInt_Bog_Poinc_10}
{\scshape M.~Costabel and A.~McIntosh}, {\em On {B}ogovski\u\i\ and regularized
  {P}oincar\'e integral operators for de {R}ham complexes on {L}ipschitz
  domains}, Math. Z., 265 (2010), pp.~297--320.

\bibitem{Dan_Ern_Sme_Voh_guar_red_18}
{\scshape P.~Daniel, A.~Ern, I.~Smears, and M.~Vohral{\'{\i}}k}, {\em An
  adaptive $hp$-refinement strategy with computable guaranteed bound on the
  error reduction factor}, Comput. Math. Appl., 76 (2018), pp.~967--983.

\bibitem{Demk_Gop_Sch_ext_I_09}
{\scshape L.~Demkowicz, J.~Gopalakrishnan, and J.~Sch{\"o}berl}, {\em
  Polynomial extension operators. {P}art {I}}, SIAM J. Numer. Anal., 46 (2008),
  pp.~3006--3031.

\bibitem{Demk_Gop_Sch_ext_II_09}
\leavevmode\vrule height 2pt depth -1.6pt width 23pt, {\em Polynomial extension
  operators. {P}art {II}}, SIAM J. Numer. Anal., 47 (2009), pp.~3293--3324.

\bibitem{Demk_Gop_Sch_ext_III_12}
\leavevmode\vrule height 2pt depth -1.6pt width 23pt, {\em Polynomial extension
  operators. {P}art {III}}, Math. Comp., 81 (2012), pp.~1289--1326.

\bibitem{Ern_Guermond_FEs_I_2020}
{\scshape A.~Ern and J.-L. Guermond}, {\em Finite Elements {I}: {A}pproximation
  and interpolation}, Springer-Verlag, New York, 2020.
\newblock In press.

\bibitem{Ern_Voh_p_rob_15}
{\scshape A.~Ern and M.~Vohral{\'{\i}}k}, {\em Polynomial-degree-robust a
  posteriori estimates in a unified setting for conforming, nonconforming,
  discontinuous {G}alerkin, and mixed discretizations}, SIAM J. Numer. Anal.,
  53 (2015), pp.~1058--1081.

\bibitem{Ern_Voh_p_rob_3D_20}
\leavevmode\vrule height 2pt depth -1.6pt width 23pt, {\em Stable broken
  {$H^1$} and {${\boldsymbol H}(\mathrm{div})$} polynomial extensions for
  polynomial-degree-robust potential and flux reconstruction in three space
  dimensions}, Math. Comp., 89 (2020), pp.~551--594.

\bibitem{Fer_Gil_Maxw_BC_97}
{\scshape P.~Fernandes and G.~Gilardi}, {\em Magnetostatic and electrostatic
  problems in inhomogeneous anisotropic media with irregular boundary and mixed
  boundary conditions}, Math. Models Methods Appl. Sci., 7 (1997),
  pp.~957--991.

\bibitem{Ned_mix_R_3_80}
{\scshape J.-C. N{\'e}d{\'e}lec}, {\em Mixed finite elements in
  {${\mathbb{R}}\sp{3}$}}, Numer. Math., 35 (1980), pp.~315--341.

\bibitem{Ra_Tho_MFE_77}
{\scshape P.-A. Raviart and J.-M. Thomas}, {\em A mixed finite element method
  for 2nd order elliptic problems}, in Mathematical aspects of finite element
  methods (Proc. Conf., Consiglio Naz. delle Ricerche (C.N.R.), Rome, 1975),
  Springer, Berlin, 1977, pp.~292--315. Lecture Notes in Math., Vol. 606.

\end{thebibliography}

\end{document}